\numberwithin{equation}{section}
\renewcommand*\env@matrix[1][*\c@MaxMatrixCols c]{%
	\hskip -\arraycolsep
	\let\@ifnextchar\new@ifnextchar
	\array{#1}}
\theoremstyle{definition}
\newtheorem{thm}{Theorem}[section]
\newtheorem{cor}[thm]{Corollary}
\newtheorem{lem}[thm]{Lemma}
\newtheorem{prop}[thm]{Proposition}
\newtheorem{defi}[thm]{Definition}
\newtheorem{rem}[thm]{Remark}
\newtheorem*{conj}{Conjecture}
\DeclareMathOperator{\im}{Im}
\DeclareMathOperator{\red}{\mathrm{red}}
\DeclareMathOperator{\Hc}{\mathcal{H}om}
\DeclareMathOperator{\Id}{\mathrm{Id}}
\DeclareMathOperator{\cn}{\mathbb{C}^{n}}
\DeclareMathOperator{\CC}{\mathbb{C}}
\DeclareMathOperator{\I}{\mathcal{I}}
\DeclareMathOperator{\mo}{\mathcal{O}}
\newcommand{\mr}[1]{\mathrm{#1}}
\newcommand{\mb}[1]{\mathbb{#1}}
\newcommand{\mc}[1]{\mathcal{#1}}
\newcommand{\ov}[1]{\overline{#1}}
\newcommand{\un}[1]{\underline{#1}}
\newcommand{\wt}[1]{\widetilde{#1}}
\newcommand{\mf}[1]{\mathfrak{#1}}
\begin{document}
	
	\title[Matrix factorization]{Matrix factorization for quasi-homogeneous singularities}
	
	\author{Ananyo Dan}

	\address{School of Mathematics and Statistics, University of Sheffield, Hicks building, Hounsfield Road, S3 7RH, UK}
	
	\email{a.dan@sheffield.ac.uk}

\author{Agust\'in Romano-Vel\'azquez}

\address{\parbox{\linewidth}{Alfréd Rényi Institute Of Mathematics, Hungarian Academy Of Sciences, Reáltanoda Utca 13-15, H-1053, Budapest, Hungary\newline Universidad Nacional Aut\'onoma de M\'exico  Avenida Universidad s/n, Colonia Lomas de Chamilpa CP 62210, Cuernavaca, Morelos Mexico}} 

\email{agustin@renyi.hu, agustin.romano@im.unam.mx}

	\thanks{}
	
	\subjclass[2020]{Primary: 13C14, 14J17, 32S25, 14E16}
	
	\keywords{Matrix factorization, Maximal Cohen-Macaulay modules, Quasi-homogeneous singularities, McKay correspondence, $\mb{C}^*$-curves, cusp singularities}
	
	\date{\today}
	
	\begin{abstract}
		Given an isolated, quasi-homogeneous singularity $X$ we prove that 
		there is a group isomorphism 
		between the group of rank one reflexive sheaves on $X$ and the free abelian group generated by
		$\mb{C}^*$-divisors, modulo linear equivalence. When $\dim(X)=2$  we reduce the problem of finding matrix 
		factorizations of arbitrary reflexive $\mo_X$-modules to the same question on rank one reflexive 
		sheaves. We then enumerate the matrix factorizations of all rank one reflexive sheaves.
		As a consequence, we prove a conjecture of Etingof and Ginzburg on point modules.
	\end{abstract}

	\maketitle
	
	\section{Introduction}
	Let $X \subset \cn$ be an integral, normal hypersurface  defined by an equation 
	$F\in \CC[[X_1,\dots,X_n]]$. Recall, \emph{matrix factorizations} of $F$ are pairs of square matrices 
	$(M_1,M_2)$ of the same rank such that the products $M_1.M_2$ and $M_2.M_1$ equals $F$ times an identity matrix.
	Eisenbud~\cite{eish} showed that there is a one-to-one correspondence between (reduced) 
	matrix factorizations of $F$ and maximal Cohen-Macaulay $\mo_{X}$-modules without free direct summands. 
	Matrix factorization plays a central role
	in singularity theory. Using matrix factorization, Kn\"orrer~\cite{kno} and 
	Buchweitz-Greuel-Schreyer~\cite{buch} proved that isolated hypersurface singularities of finite Cohen-Macaulay representation type are exactly the simple 
	ones. In the early $2000$s, Kapustin~\cite{kapustin}, and Orlov~\cite{orlo1,orlo2,orlov2} showed that matrix factorizations can be applied to study Landau-Ginzburg models appearing in string theory, and to the study of Kontsevich's homological mirror symmetry. In particular, by the work of Orlov there exists an equivalence between 
	the bounded derived category $D^b(X)$ and the homotopy category of matrix factorizations of $F$. In general, the first category is hard to compute. Thus, producing concrete families of matrix factorizations can be one way of understanding $D^b(X)$.

	Unfortunately, there are no ``good'' algorithms to obtain matrix factorizations. As a result
	concrete examples of matrix factorizations are rather limited in the literature. 
	For example, Buchweitz, Eisenbud and Herzog~\cite{buch2} proved that for
	$F_n(X_1,\dots,X_n)=X_1^2 + \dots + X_n^n$ with $n \geq 8$ the smallest size of a matrix factorization is bounded below by $2^{\frac{n-2}{2}}\times 2^{\frac{n-2}{2}}$. In particular for $F_{16}$ the smallest matrix factorization is of size $128\times 128$. Crisler and Diveris~\cite{cris} produced an algorithm to produce matrix factorization for the polynomial $F_n$ only for $n \leq 8$. By studying the polynomial $F_{16}$ they notice that their algorithm fails and it is impossible to fix it.
	 Laza, Pfister and Popescu~\cite{lazam} computed all the matrix factorization associated to rank one
	 reflexive sheaves over the surface defined by $F_3$. 
	 Baciu~\cite{baci} computed all the matrix factorizations associated to rank two graded Ulrich modules on the
	hypersurface defined by $X_1^3 + X_1^2X_3 - X_2X_3$. Etingof and Ginzburg~\cite{ginz} produced a family of matrix factorizations for the family of hypersurfaces gives by the polynomial 
	$X_1^3 + X_2^3 + X_3^3 + \tau X_1X_2X_3$ as $\tau$ varies over non-zero complex numbers. 
	Ros Camacho and Newton~\cite{cama1,cama2} computed concrete matrix factorizations for exceptional 
	unimodal hypersurface singularities. The goal of this article is to generalize some of these results to 
	any isolated, quasi-homogeneous hypersurface singularity (upto topologically trivial deformations).

	Let $(X,x)$ be an isolated, quasi-homogeneous hypersurface singularity of dimension $2$.
	This means that there exist integers $(\omega_1, \omega_2,\omega_3,d)$ such that the defining equation $F$
	satisfies: \[F(\lambda^{\omega_1}X_1,\lambda^{\omega_2}X_2,\lambda^{\omega_3}X_3)=\lambda^dF(X_1,X_2,X_3),\, 
	\mbox{ for all } \lambda \in \mb{C}^*.\]
	The integers $\omega_1, \omega_2,\omega_3$ are called the \emph{weights} of the hypersurface.
	Note first that every maximal Cohen-Macaulay module $M$ on $X$ 
	sits in an exact sequence with $4$ terms.
	Besides $M$ the remaining three terms are a trivial bundle,  a skyscraper sheaf 
	supported on the singular point $x$ and a rank one reflexive sheaf $\mc{L}$, which we will call the 
	\emph{determinant} of 
	$M$ (Theorem \ref{thm:mck}). Projective resolutions of skyscraper sheaves 
	are well-understood. Moreover, to obtain projective resolutions of short exact sequences, one simply needs to 
	determine the projective resolution of two of the three terms (satisfying the obvious compatibility conditions).
	As a result, finding the matrix factorization corresponding to $M$ reduces to determining the matrix factorization 
	corresponding to its determinant $\mc{L}$. We first classify all such rank one reflexive sheaves. 
	Denote by $\mr{Ref}^{(1)}(X)$ the group of all reflexive rank one sheaves on $X$ (see \S \ref{sec:rank1}
	for the group structure) and by $\mc{D}(X)$ the free abelian group generated by classes of $\mb{C}^*$-curves
	(i.e., curves that are invariant under the natural $\mb{C}^*$-action on $X$, see \S \ref{sec:div}), 	modulo linear equivalence. 
		We prove:
	
	\begin{thm}\label{thm}
		Any integral curve $D$ in $X$ is either a $\mb{C}^*$-curve or is CI-linked (see Definition \ref{defi:ci})
		to a $\mb{C}^*$-curve. Moreover, there is an isomorphism of abelian groups:
		\begin{equation}\label{eq03}
		 \mc{D}(X) \to \mr{Ref}^{(1)}(X)\, \mbox{ sending } D \in \mc{D}(X) \mbox{ to } i_*\mo_{X^*}(D \cap X^*),
		\end{equation}
		where $X^*:=X\backslash \{x\}$ is the regular locus in $X$ and $i:X^* \to X$ is the open immersion.
	\end{thm}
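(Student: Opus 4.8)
The plan is to identify the target group with the divisor class group and reduce the whole statement to a surjectivity assertion that is exactly the content of the first sentence. First I would recall the standard dictionary for a normal surface: since $x \in X$ is a single point it has codimension $2$, so restriction induces an isomorphism $\mr{Cl}(X) \xrightarrow{\sim} \mr{Cl}(X^*) = \Pic(X^*)$, the last equality because $X^*$ is smooth. On the other hand, for normal $X$ the assignment $D \mapsto \mo_X(D)$ gives a group isomorphism $\mr{Cl}(X) \xrightarrow{\sim} \mr{Ref}^{(1)}(X)$ (with the reflexive-tensor group law on the right), whose inverse sends a reflexive sheaf to its restriction to $X^*$. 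Composing, the prescription $D \mapsto i_*\mo_{X^*}(D \cap X^*)$ is nothing but this isomorphism read through $\mr{Cl}(X) \cong \mr{Cl}(X^*)$; in particular it is a well-defined group homomorphism, and it factors through linear equivalence tautologically. Hence the map $\mc{D}(X) \to \mr{Ref}^{(1)}(X)$ is \emph{injective} by the very definition of $\mc{D}(X)$ as the group of $\mb{C}^*$-divisors modulo linear equivalence, and everything comes down to proving it is \emph{surjective}, i.e. that the classes of $\mb{C}^*$-curves generate $\mr{Cl}(X)$.

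For surjectivity it suffices to show that the class of an arbitrary integral curve $D \subset X$ lies in the subgroup generated by $\mb{C}^*$-curves, and this is precisely the first assertion: if $D$ is already $\mb{C}^*$-invariant there is nothing to do, while if $D$ is CI-linked to a $\mb{C}^*$-curve $D'$ by a complete intersection $X \cap V(h)$ then $\mr{div}_X(h) = D + D'$ gives $[D] = -[D']$ in $\mr{Cl}(X)$, again a $\mb{C}^*$-class. So the heart of the proof is the liaison statement. To produce the linkage I would exploit the grading coming from the $\mb{C}^*$-action on the coordinate ring $R$ of $X$. Writing $\mf{p} \subset R$ for the height-one prime of $D$, the key point is that any height-one prime of a graded normal domain is linearly equivalent to a homogeneous one: I would degenerate $\mf{p}$ to its initial ideal $\mathrm{in}(\mf{p})$ (equivalently, flow $D$ by $\lambda \in \mb{C}^*$ and pass to the flat limit $D_0 = \lim_{\lambda \to 0}\lambda \cdot D$). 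The limit $D_0$ is $\mb{C}^*$-invariant, hence an integral combination of $\mb{C}^*$-curves, and the resulting flat family over $\mb{A}^1$ is the geometric avatar of the complete intersection realizing the CI-linkage of Definition \ref{defi:ci}.

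It then remains to check that this degeneration preserves the divisor \emph{class}, not merely the algebraic equivalence class. Here I would use that the total space of the family is an integral divisor $\ov{\Gamma} \subset X \times \mb{A}^1$, flat over $\mb{A}^1$ because it is dominant over a smooth curve; restricting the class $[\ov{\Gamma}]$ to the fibres over $\lambda = 1$ and $\lambda = 0$ and invoking the homotopy invariance $\mr{Cl}(X \times \mb{A}^1) \cong \mr{Cl}(X)$ for normal $X$ yields $[D] = [\ov{\Gamma}|_{1}] = [\ov{\Gamma}|_{0}] = [D_0]$ in $\mr{Cl}(X)$. Since $\mb{C}^*$ is connected, every translate $\lambda \cdot D$ is moreover linearly equivalent to $D$, which is consistent with and reproves this equality.

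I expect the genuine obstacle to be controlling the special fibre of the degeneration: one must ensure that $\ov{\Gamma}|_{0}$ remains a \emph{divisor} on $X$ (no collapse onto the vertex $x$ and no spurious lower-dimensional or embedded components), that it is reduced after discarding multiplicities so that it is an honest integral combination of $\mb{C}^*$-curves, and that the abstract flat limit can be exhibited as the residual of a single equation $h$ as required by the complete-intersection formulation of Definition \ref{defi:ci}. A secondary technical point is the passage between the germ $(X,x)$ and its global quasi-homogeneous (cone) model, which is what makes the grading and the degeneration available; this is harmless because $\mr{Cl}$ is insensitive to completion for these singularities, but it should be recorded.
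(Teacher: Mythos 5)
Your reduction of the isomorphism \eqref{eq03} to the assertion that classes of $\mb{C}^*$-curves generate the divisor class group is the same as the paper's (the paper likewise identifies $\mr{Ref}^{(1)}(X)\cong\mr{Pic}(X^*)$ and gets injectivity tautologically). Where you genuinely diverge is in how you prove generation: you flow $D$ by the $\mb{C}^*$-action, take the flat limit $D_0$, and use homotopy invariance of $\mr{Cl}(-)$ plus specialization to fibres to get $[D]=[D_0]$, with $D_0$ an effective cycle supported on invariant curves. That route is essentially sound -- flatness of $\ov{\Gamma}$ over $\mb{A}^1$ already forces the central fibre to be pure one-dimensional (so your worry about collapse onto the vertex is vacuous), and multiplicities or embedded points in the limit are harmless at the level of cycle classes -- and it treats horizontal and vertical curves uniformly. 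The paper instead proves the liaison statement (Theorem \ref{thm:ci}) directly: it passes to the quotient $\pi_X\colon X^*\to\mb{P}^{\un{\omega}}_{X^*}$, picks an affine open $U$ over which $\pi_X$ is smooth so that $\pi_X^{-1}(U)$ is smooth affine with trivial Picard group, writes the ideal of $D$ there as $(f)$, extends $f$ to a regular function $\wt{f}$ on $X$, and notes that the residual part of $Z(\wt{f})$ lies over the complement of $U$, hence is a union of fibres of $\pi_X$, i.e.\ of $\mb{C}^*$-curves; Theorem \ref{thm:ref} then follows exactly as in your first two paragraphs.

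The genuine gap is that your proposal does not prove the first assertion of the theorem, the CI-linkage, which is not a corollary of your degeneration but an independent (and in the paper, logically prior) geometric claim. Your sentence that the flat family ``is the geometric avatar of the complete intersection realizing the CI-linkage'' is not correct, and your closing paragraph concedes the point: the two relations have opposite signs. The degeneration yields $[D]=[D_0]$ with \emph{both} cycles effective, whereas Definition \ref{defi:ci} demands a single regular function $g$ with $Z(g)\cap X=D\cup E$, i.e.\ $\mr{div}_X(g)=D+(\mbox{effective invariant part})$, which in the class group reads $[D]=-[E]$. To convert your output into a CI-link you would still need to represent $-[D_0]$ by an \emph{effective} $\mb{C}^*$-divisor $E$ and realize $D+E\sim 0$ by a regular (not merely rational) function in which $D$ occurs with multiplicity one; this requires an extra positivity input (e.g.\ twisting by a large ample class on the orbifold quotient so the relevant class becomes effective and lifting the section to a regular function on $X$), which your argument nowhere supplies. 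The paper's construction gets this for free, since the residual divisor of $Z(\wt{f})$ is automatically effective and vertical. So, as written, your proposal establishes the isomorphism \eqref{eq03} by a valid alternative method, but not the full statement; to complete it, either add the positivity step just described or run the paper's liaison argument for Theorem \ref{thm:ci}.
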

	See Theorems \ref{thm:ci} and \ref{thm:ref} for a more general statement that holds in any dimension of $X$.
	This can be viewed as a McKay-type correspondence where the left hand side of the correspondence 
	\eqref{eq03} parameterizes geometric objects namely $\mb{C}^*$-divisors and the right hand side parameterizes
	algebraic objects namely rank one reflexive sheaves.
	
	In arbitrary rank, there is a $1-1$ correspondence between maximal Cohen-Macaulay $\mo_X$-modules 
	and rank one Cohen-Macaulay $\mo_X$-modules supported on divisors (see Proposition \ref{prop:corr}).
	This correspondence associates to a rank $r$ maximal Cohen-Macaulay $\mo_X$-modules $M$ along with 
	a general choice of $r$ sections, its degeneracy module. The advantage of this correspondence is that 
	one can obtain the matrix factorization of $M$ from a projective resolution of the associated
	degeneracy module (Theorem \ref{thm2}). The latter is an easier problem. We use this idea in the proof of Theorem \ref{thm:Quasi} below.
	
	By Theorem \ref{thm} above, rank one maximal Cohen-Macaulay modules are generated (via tensor product) by 
	those arising from integral $\mb{C}^*$-curves. As a result, maximal Cohen-Macaulay modules associated to 
	non-singular $\mb{C}^*$-curves are of particular interest. We call such modules \emph{generalized Wunram modules} (see \S \ref{sec:gen-wun}). 
	 We give an explicit description of the matrix factorization corresponding to rank one 
    generalized Wunram modules in Theorem \ref{thm:Quasi} below. 
    Note that, $X$ contains a non-singular $\mb{C}^*$-curve if and only if 
    (upto reparametrization) one of the 
	weights of $X$ is one. Recall, 
	Orlik and Wagreich~\cite{orli} and Arnold~\cite{arn1} classified isolated quasi-homogeneous
	surface singularities, upto topologically trivial deformations (see table in \S \ref{sec:type}). 
    Corresponding to the types of singularities mentioned in this table we derive the following list of 
    matrix factorizations: 
	
	\begin{thm}\label{thm:Quasi}
	 Let $X$ be a quasi-homogeneous singularity of weight $(1, \omega_2, \omega_3)$ listed in Table \ref{tabl1} in \S \ref{sec:type} below. 
	 Given positive integers $n, m$ and complex numbers $c_1, c_2$, denote by:
	\[				S_{(c_1,c_2,n,m)}(Z_1,Z_2):=\sum_{j=1}^m \frac{Z_1^{(j-1)n}Z_2^{m-j}c_1^{j-1} }{c_2^{jn}}\]
		Then,  the matrix factorization associated to any rank one generalized Wunram module on $X$ is a pair of 
		$2 \times 2$ matrices $(\mr{adj}(A),A)$ where $A$ is a matrix of the form $A:=(m_{i,j})$ 
		for $i,j \in \{1,2\}$ with 
		\[m_{1,1}= X_1^{\omega_2}b-X_2a^{\omega_2},\, m_{1,2}= X_3a^{\omega_3}-X_1^{\omega_3}c\, \mbox{ and }\]
		the entries $m_{2,1}, m_{2,2}$ are given by the following table where the first column enumerates the various singularity types from Table \ref{tabl1}:
		{\small \begin{center}
\begin{tabular}{ | m{1.7cm} | m{7.4cm} | m{4.6cm} | } 
\hline
 Type &  $m_{2,1}$ & $m_{2,2}$\\
  \hline
  $\mr{I}_{p,q,r}$   & $S_{(c,a,\omega_3,r)}(X_1,X_3)$ & $S_{(b,a,\omega_2,q)}(X_1,X_2)$ \\ 
  \hline
   $\mr{II}_{p,q,r}$   & \vspace{1mm} $\frac{bX_1^{\omega_2}}{a^{\omega_2}}S_{(c,a,\omega_3,r)}(X_1,X_3)$ & $S_{(b,a,\omega_2,q)}(X_1,X_2)+$ $+X_3^rS_{(b,a,\omega_2,1)}(X_1,X_2)$ \\ 
  \hline 
  $\mr{III}_{p,q,r}$   & \vspace{1mm} $\frac{bX_1^{\omega_2}}{a^{\omega_2}}S_{(c,a,\omega_3,r)}(X_1,X_3)$ $+ \frac{b^qX_1^{q\omega_2}}{a^{q\omega_2}}S_{(c,a,\omega_3,1)}(X_1,X_3)$ & $X_3S_{(b,a,\omega_2,q)}(X_1,X_2)+$ $+X_3^rS_{(b,a,\omega_2,1)}(X_1,X_2)$  \\ 
  \hline 
  $\mr{IV}_{p,q,r}$   & \vspace{1mm} $XS_{(c,a,\omega_3,r)}(X_1,X_3)$ $+ \frac{b^qX_1^{q\omega_2}}{a^{q\omega_2}}S_{(c,a,\omega_3,1)}(X_1,X_3)$ & $X_3S_{(b,a,\omega_2,q)}(X_1,X_2)$ \\ 
  \hline 
  $\mr{V}_{p,q,r}$   & $XS_{(c,a,\omega_3,r)}(X_1,X_3)$ $+ \frac{b^qX_1^{q\omega_2}}{a^{q\omega_2}}S_{(c,a,\omega_3,1)}(X_1,X_3)$ & $X_3S_{(b,a,\omega_2,q)}(X_1,X_2)+$ $+X_1^pS_{(b,a,\omega_2,1)}(X_1,X_2)$ \\ 
  \hline 
  $\mr{VI}_{p,q,r,b_2,b_3}$   & $XS_{(c,a,\omega_3,r)}(X_1,X_3)$ $+ \frac{b^{b_2}X_1^{b_2\omega_2}}{a^{b_2\omega_2}}S_{(c,a,\omega_3,b_3)}(X_1,X_3)$ &  $XS_{(b,a,\omega_2,q)}(X_1,X_2)+$
  $+X_3^{b_3}S_{(b,a,\omega_2,b_2)}(X_1,X_2)$\\ 
  \hline 
  $\mr{VII}_{p,q,r,b_2,b_3}$   & $XS_{(c,a,\omega_3,r)}(X_1,X_3)$ $+ \frac{b^{b_2}X_1^{b_2\omega_2}}{a^{b_2\omega_2}}S_{(c,a,\omega_3,b_3)}(X_1,X_3)$ & $XS_{(b,a,\omega_2,q)}(X_1,X_2)+$ $+X_3^{b_3}S_{(b,a,\omega_2,b_2)}(X_1,X_2)+$ $+X_1^pS_{(b,a,\omega_2,1)}(X_1,X_2)$\\ 
  \hline
\end{tabular}
\end{center}}
where $(a,b,c)$ varies over all points in $X$ with $a \not= 0$.
	\end{thm}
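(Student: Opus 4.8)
The plan is to realize every rank one generalized Wunram module as the ideal sheaf of an explicit non-singular $\mb{C}^*$-curve, and then to read off its matrix factorization from an expression of $F$ inside that ideal. Since the weight of $X_1$ equals $1$, the action is $(X_1,X_2,X_3)\mapsto(\lambda X_1,\lambda^{\omega_2}X_2,\lambda^{\omega_3}X_3)$, and the orbit closure of a point $(a,b,c)\in X$ with $a\neq 0$ is the curve
\[C:\quad t\longmapsto\Bigl(t,\tfrac{b}{a^{\omega_2}}t^{\omega_2},\tfrac{c}{a^{\omega_3}}t^{\omega_3}\Bigr),\]
which is smooth and irreducible (parametrized by $X_1=t$). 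First I would verify that $C$ is the complete intersection cut out in $\CC^3$ by $m_{1,1}=bX_1^{\omega_2}-a^{\omega_2}X_2$ and $m_{1,2}=a^{\omega_3}X_3-cX_1^{\omega_3}$ — these are exactly the two graphing equations of the parametrization — so that its ideal in $R=\CC[[X_1,X_2,X_3]]$ is $I_C=(m_{1,1},m_{1,2})$. The inclusion $C\subset X$ follows from quasi-homogeneity: substituting $\lambda=t/a$ into the weight identity gives $F|_C(t)=(t/a)^dF(a,b,c)=0$ precisely because $(a,b,c)\in X$. By the description in \S\ref{sec:gen-wun} together with Theorem \ref{thm}, every rank one generalized Wunram module is $\mc{L}=i_*\mo_{X^*}(C\cap X^*)$ for such a $C$, modeled on the regular locus by $I_C$; as $(a,b,c)$ ranges over $X$ with $a\neq0$ these exhaust all of them.

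The heart of the matter is to make the inclusion $F\in(m_{1,1},m_{1,2})$ explicit, i.e. to produce $m_{2,1},m_{2,2}\in R$ with $F=m_{1,1}\,m_{2,2}-m_{1,2}\,m_{2,1}$. The key computational tool is the closed form of $S$: expanding the defining sum as a geometric progression yields the telescoping identity
\[(a^{\omega_2}X_2-bX_1^{\omega_2})\,S_{(b,a,\omega_2,m)}(X_1,X_2)=X_2^m-\frac{b^m}{a^{m\omega_2}}X_1^{m\omega_2},\]
and the analogous identity in the variables $(X_1,X_3)$ for $S_{(c,a,\omega_3,m)}$. Thus each pure power $X_2^q$ or $X_3^r$ occurring in $F$ is divisible by $m_{1,1}$ or $m_{1,2}$ with quotient an explicit $S$, the remainder being a monomial in $X_1$ alone. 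Summing these remainders and invoking $F(a,b,c)=0$ then forces the accumulated $X_1$-coefficient to vanish: for instance in type $\mr{I}_{p,q,r}$ one finds $\det A=\tfrac{b^q+c^r}{a^{p}}X_1^{p}-X_2^q-X_3^r$, which collapses to $F$ because the relation $F(a,b,c)=0$ reads $b^q+c^r=a^{p}$. I would then carry this out type by type against Table \ref{tabl1}: the additional monomials of $F$ in types $\mr{II}$--$\mr{VII}$ (the $X_3^r$-, $X_1^p$- and mixed corrections) are exactly what produce the extra summands recorded in the $m_{2,1},m_{2,2}$ columns.

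Setting $A=(m_{i,j})$, the identity $F=\det A$ gives $\mr{adj}(A)\,A=A\,\mr{adj}(A)=F\cdot\Id$, so $(\mr{adj}(A),A)$ is a matrix factorization of $F$. To finish I would identify its cokernel with $\mc{L}$ (up to the duality interchanging $[C]$ and $-[C]$, which swaps the two factors). Concretely, over $\mo_X$ the assignment $\bar e_1\mapsto m_{1,1},\ \bar e_2\mapsto m_{1,2}$ defines a map out of $\mr{coker}(\mr{adj}(A))$ whose well-definedness is precisely the two relations $m_{2,2}m_{1,1}-m_{2,1}m_{1,2}=\det A\equiv0$ and $-m_{1,2}m_{1,1}+m_{1,1}m_{1,2}=0$; its image is $I_C\,\mo_X$, and checking that it is an isomorphism of rank one reflexive sheaves (equivalently, comparing with the Koszul resolution of the complete intersection $C$ reduced modulo $F$) exhibits $(\mr{adj}(A),A)$ as the matrix factorization attached to $\mc{L}$ via Theorem \ref{thm2}.

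I expect the main obstacle to be the bookkeeping of the type-by-type division in the second paragraph: for the singularities with cross-terms one must divide $F$ by the two linear generators in the correct order, confirm that the quotients assemble into precisely the prescribed $S$-combinations, and check that no leftover monomial survives beyond the single $X_1$-power annihilated by $F(a,b,c)=0$. A secondary subtlety, handled by appeal to the earlier structural results, is confirming that the orbit closures above exhaust the non-singular $\mb{C}^*$-curves and that each resulting $\mr{coker}$ is reflexive of rank one, so that the family $(\mr{adj}(A),A)$ genuinely parametrizes all rank one generalized Wunram modules rather than a proper subfamily.
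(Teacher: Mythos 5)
Your proposal is correct and follows essentially the same route as the paper: you cut out the $\mb{C}^*$-curve by the two binomials $m_{1,1},m_{1,2}$, use the same telescoping identity (the paper's \eqref{eq:ProdElem}), perform the same type-by-type division of $F$ using $F(a,b,c)=0$ together with the weight relations, and your final cokernel/Koszul comparison is precisely the paper's Corollary \ref{thm1} (a consequence of Theorem \ref{thm2}), which is how the paper itself concludes. The only blemish is a sign slip ($F(a,b,c)=0$ gives $b^q+c^r=-a^p$, so $\det A=-F$ rather than $F$), but the paper glosses over the identical sign in the same way, and it is harmless since it only changes the factorization by a unit.
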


	This result will be proved in \S \ref{sec:proof}. The remaining case $a=0$ is treated in Remark \ref{rem:comp}.
	Note that this also gives explicit families of matrix factorizations parameterized by points on $X$.
	Our computation recovers the matrix factorizations obtained by Laza, Pfister and Popescu~\cite{lazam}.  As a consequence of Theorem \ref{thm:Quasi} above we prove special cases of a conjecture of Etingof and Ginzburg \cite[Conjecture $3.6.8$]{ginz}:
	
	\begin{conj}
	 Let $F$ be the free tensor algebra with basis $X_1, X_2, X_3$, $\Phi \in F/[F,F]$, $\mf{A}(\Phi):= F/ \langle \langle \partial_i \Phi\rangle \rangle_{i=1,2,3}$ and for a central element $\Psi$ not a zero divisor in $\mf{A}(\Phi)$ denote by  $\mf{B}(\Phi,\Psi):= \mf{A}(\Phi)/ \langle \langle \Psi\rangle \rangle$. To any point module $P$ (see \cite[Definition $3.8$]{art3}) over the algebra $\mf{B}(\Phi,\Psi)$
	 one can naturally associate a matrix factorization $M(P)=(M_+,M_-)$.
	\end{conj}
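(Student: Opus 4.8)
The plan is to realize each point module over $\mathfrak{B}(\Phi,\Psi)$ as a geometric datum on one of the quasi-homogeneous surface singularities $X$ of Table \ref{tabl1}, and then to read off its matrix factorization directly from Theorem \ref{thm:Quasi}. I would prove the conjecture in the special cases where the abelianization $\mathfrak{B}(\Phi,\Psi)^{\mr{ab}}$ is the ring $\mo_X$ of such an $X$ with one weight equal to $1$; this includes the cubic potential $\Phi=X_1^3+X_2^3+X_3^3+\tau X_1X_2X_3$ of Etingof and Ginzburg, whose abelianization is the simple-elliptic cone appearing in the table with weights $(1,1,1)$. The first step is to set up the dictionary between the noncommutative and commutative pictures: the cyclic derivatives $\partial_i\Phi$ abelianize to the partials of the commutative potential, the central element $\Psi$ abelianizes to the defining equation $F$ of $X$, and the \emph{point scheme} parameterizing point modules is identified with the $\mb{C}^*$-curve $E\subset X$ cut out at infinity.

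Next, given a point module $P=\bigoplus_{i\ge 0}P_i$, I would extract its associated closed point $p=[a:b:c]\in E\subset\mb{P}^2$ from the generator of $P_0$ together with the multiplication maps $\mathfrak{B}_1\otimes P_i\to P_{i+1}$, which under the above identification are given by evaluation at $p$. Because one of the weights of $X$ equals $1$, the $\mb{C}^*$-curve $\ell_p\subset X$ lying over $p$ (the orbit closure through $p$) is non-singular, and the triple $(a,b,c)$ with $a\neq 0$ is exactly the parameter occurring in Theorem \ref{thm:Quasi}. By Theorem \ref{thm}, $\ell_p$ determines a rank one reflexive sheaf $\mc{L}_p=i_*\mo_{X^*}(\ell_p\cap X^*)\in \mr{Ref}^{(1)}(X)$, which is the rank one generalized Wunram module attached to a non-singular $\mb{C}^*$-curve (\S \ref{sec:gen-wun}).

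Finally, I would set $M(P):=(M_+,M_-)=(\mr{adj}(A),A)$, where $A=(m_{i,j})$ is the matrix produced in Theorem \ref{thm:Quasi} for the relevant type with $(a,b,c)=p$. The matrix factorization identity $M_+M_-=M_-M_+=F\cdot\Id$ is immediate from $\mr{adj}(A)\,A=A\,\mr{adj}(A)=\det(A)\,\Id$ together with the equality $\det(A)=F$ established in the proof of Theorem \ref{thm:Quasi}. Naturality follows since each step $P\mapsto p\mapsto \mc{L}_p\mapsto M(P)$ is canonical: the point scheme structure is intrinsic, the isomorphism of Theorem \ref{thm} is fixed, and the formula of Theorem \ref{thm:Quasi} is explicit; in particular isomorphic point modules yield the same $p$ and hence equivalent matrix factorizations. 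One may also phrase this through Proposition \ref{prop:corr} and Theorem \ref{thm2} by passing to the degeneracy module, which is where the $2$-periodicity of the resolution of $P$ over the noncommutative ``hypersurface'' $\mathfrak{B}$ becomes the matrix factorization.

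The main obstacle is the first step, namely pinning down the bridge between $\mathfrak{B}(\Phi,\Psi)$ and the commutative surface $X$. Concretely one must show that the point scheme of $\mathfrak{B}(\Phi,\Psi)$ is exactly the $\mb{C}^*$-curve $E$ and that $\Psi$ abelianizes to $F$, using a description of the center of $\mathfrak{A}(\Phi)$ and of the minimal free resolution of $P$ over $\mathfrak{B}$. This is also the reason only special cases are within reach: the weight-one hypothesis of Theorem \ref{thm:Quasi} is needed for a non-singular $\mb{C}^*$-curve to exist, and not every $X$ in Table \ref{tabl1} admits a potential $\Phi$ and a central element $\Psi$ realizing it as $\mathfrak{B}(\Phi,\Psi)^{\mr{ab}}$. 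Once this identification is in place, the remaining steps are direct applications of Theorems \ref{thm}, \ref{thm2} and \ref{thm:Quasi}.
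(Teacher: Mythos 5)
Your endgame---a point of $X^*$ with $a\neq 0$, its $\mb{C}^*$-orbit closure, and the explicit matrices of Theorem \ref{thm:Quasi}, with naturality coming from the explicitness of the formulas---is exactly how the paper proves Theorem \ref{thm:conj}. The genuine gap is in your first step, the ``bridge,'' and it is not a deferrable technicality. The paper never works with abelianizations: it chooses the potential $\Phi:=X_1X_2X_3-X_2X_1X_3$, for which $\mf{A}(\Phi)=\mb{C}[X_1,X_2,X_3]$ identically (\cite[Example $1.3.3$]{ginz2}), and lets the polynomial from Table \ref{tabl1} play the role of the \emph{central element} $\Psi$; then $\mf{B}(\Phi,\Psi)=\mo_X$ on the nose and point modules are honest $\mo_X$-modules. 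You instead take the Etingof--Ginzburg cubic itself as the \emph{potential} $\Phi$ and assert that the abelianization of $\mf{B}(\Phi,\Psi)$ is the elliptic cone of weights $(1,1,1)$. That is false: for $\Phi=X_1^3+X_2^3+X_3^3+\tau X_1X_2X_3$ the cyclic derivatives are $3X_i^2+\tau X_{i+1}X_{i+2}$ (indices mod $3$), so $\mf{A}(\Phi)$ is a noncommutative quadratic algebra whose abelianization is the Jacobian ring $\mb{C}[X_1,X_2,X_3]/(3X_1^2+\tau X_2X_3,\,3X_2^2+\tau X_1X_3,\,3X_3^2+\tau X_1X_2)$---finite-dimensional for generic $\tau$---and $\mf{B}(\Phi,\Psi)^{\mr{ab}}$ is a quotient of it, never $\mo_X$. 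So the class of cases you claim to cover is mis-identified, and the geometric dictionary (point scheme $=\mb{P}^{\un{\omega}}_{X^*}$, orbit closures, Theorem \ref{thm:ref}) on which all your later steps rely is unavailable in precisely the example you highlight.

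For genuinely noncommutative $\mf{A}(\Phi)$ the step you yourself call ``the main obstacle'' is the entire content of the conjecture, and neither you nor the paper supplies it; worse, even granting that identification, Theorem \ref{thm:Quasi} produces matrices over $\mb{C}[[X_1,X_2,X_3]]$ factoring the commutative polynomial $F$, whereas in the noncommutative setting the conjecture asks for a factorization of the central element $\Psi$ by matrices over $\mf{A}(\Phi)$, and you give no argument lifting the former to the latter. If, on the other hand, one restricts to the commutator potential---the only case the paper actually proves---your outline is correct and essentially coincides with the paper's, but it still omits two points the paper addresses: point modules attached to points with $a=0$, where the orbit closure can be singular and Theorem \ref{thm:Quasi} does not apply as stated (this is treated in Remark \ref{rem:comp}), and point modules which the paper allows to decompose as direct sums of residue fields $k(a_i,b_i,c_i)$, handled there by taking block-diagonal sums of the matrix factorizations $M(P_i)$.
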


	Using Theorem \ref{thm:Quasi} we prove:
	
	\begin{thm}
	 Take $\Phi:= X_1X_2X_3-X_2X_1X_3$. Then, for suitable choices of $\Psi$ the above conjecture holds true
	 i.e., to any point module $P$  over the algebra $\mf{B}(\Phi,\Psi)$
	 one can naturally associate a matrix factorization.
	\end{thm}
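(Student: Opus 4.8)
The plan is to reduce the conjecture, for this particular $\Phi$, to the explicit matrix factorizations already produced in Theorem \ref{thm:Quasi}. The decisive first observation is that this choice of $\Phi$ makes the Jacobi algebra $\mf{A}(\Phi)$ commutative. A direct computation of the cyclic derivatives gives
\[\partial_1\Phi = X_2X_3 - X_3X_2,\quad \partial_2\Phi = X_3X_1 - X_1X_3,\quad \partial_3\Phi = X_1X_2 - X_2X_1,\]
so the two-sided ideal $\langle\langle \partial_i\Phi\rangle\rangle_{i=1,2,3}$ is exactly the commutator ideal of the free algebra $F$. Hence $\mf{A}(\Phi)\cong \CC[X_1,X_2,X_3]$, the ordinary polynomial ring, which is a commutative domain; every nonzero weighted-homogeneous element is therefore automatically central and a non-zero-divisor. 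I would then take $\Psi=F$ to be the weighted-homogeneous equation of one of the isolated surface singularities of Table \ref{tabl1}, of weight $(1,\omega_2,\omega_3)$. With this choice $\mf{B}(\Phi,\Psi)=\CC[X_1,X_2,X_3]/(F)$ is precisely the graded coordinate ring $\mo_X$ of such an $X$, i.e. the setting in which Theorem \ref{thm:Quasi} operates.

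Next I would identify the point modules of $\mf{B}(\Phi,\Psi)$ geometrically. A point module is a cyclic graded module with one-dimensional pieces in every non-negative degree; for the commutative connected-graded ring $\mo_X$ these are supported at points $[a:b:c]$ of the weighted projective hypersurface cut out by $F$ in $\mb{P}(1,\omega_2,\omega_3)$, the associated module being $\mf{B}/I_{[a:b:c]}$, with $I_{[a:b:c]}$ generated by the weighted-homogeneous forms vanishing at the point. Here the weighted grading is not a nuisance but the source of the hypothesis $a\neq0$ in Theorem \ref{thm:Quasi}: since $X_1$ has weight $1$, a point with $a\neq0$ makes multiplication by $X_1$ identify consecutive graded pieces, forcing each to be one-dimensional, so that $\mf{B}/I_{[a:b:c]}$ is an honest point module; points with $a=0$ lie on the higher-weight coordinate loci and are exactly the degenerate cases. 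Because $X$ carries the $\mb{C}^*$-action with these weights, a point $[a:b:c]$ with $a\neq0$ is the same datum as the $\mb{C}^*$-orbit closure through $(a,b,c)\in X$, which is a \emph{non-singular} $\mb{C}^*$-curve in the sense of \S \ref{sec:div}.

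It then remains to attach a matrix factorization to each point module, naturally in $[a:b:c]$. Via Theorem \ref{thm} each such $\mb{C}^*$-curve determines a class in $\mr{Ref}^{(1)}(X)$, and for the non-singular $\mb{C}^*$-curves obtained above the corresponding rank one reflexive sheaf is a rank one generalized Wunram module (\S \ref{sec:gen-wun}). Applying Theorem \ref{thm:Quasi} to this module produces the pair $(M_+,M_-):=(\mr{adj}(A),A)$, where $A=(m_{i,j})$ has the entries tabulated there in terms of the point $(a,b,c)$ with $a\neq0$. Setting $M(P):=(\mr{adj}(A),A)$ gives the desired matrix factorization; the assignment $P\mapsto M(P)$ is natural because the tabulated entries depend algebraically on $(a,b,c)$, and the defining identity $\mr{adj}(A)\,A=A\,\mr{adj}(A)=\det(A)\,\Id$ reproduces $\Psi$ (up to the expected scalar) times the identity. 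The residual orbit closures with $a=0$ — a finite set of coordinate $\mb{C}^*$-curves — are covered by the computation in Remark \ref{rem:comp}, completing the assignment for every $\Psi=F$ of the listed types.

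The main obstacle I expect is not the construction but the careful matching of Artin's combinatorial definition of point module (\cite[Definition $3.8$]{art3}) with the geometric parameterization under the \emph{weighted} grading: one must verify that, despite the non-standard weights $(1,\omega_2,\omega_3)$, the honest point modules are exhausted by the orbit-closure data with $a\neq0$, that no spurious or fat point modules intervene, and that the degenerate locus $a=0$ is genuinely absorbed by the reflexive-sheaf correspondence of Theorem \ref{thm}. Once this dictionary is in place, checking that $M(P)=(M_+,M_-)$ has the sizes and product relations predicted by Etingof and Ginzburg is an immediate consequence of the explicit $(\mr{adj}(A),A)$ structure supplied by Theorem \ref{thm:Quasi}.
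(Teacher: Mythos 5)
Your proposal is correct and follows essentially the same route as the paper: identify point modules of $\mf{B}(\Phi,\Psi)=\CC[X_1,X_2,X_3]/(\Psi)$ with points $(a,b,c)\in X^*$, and attach to each the matrix factorization that Theorem \ref{thm:Quasi} produces from the $\mb{C}^*$-orbit closure through that point (a rank one generalized Wunram module), with the $a=0$ locus handled by Remark \ref{rem:comp}. The only step the paper adds is to write an arbitrary point module as a direct sum of such residue-field modules and take the associated matrix factorizations block-diagonally, a step your cyclic-module reading of Artin's definition makes unnecessary.
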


	See Theorem \ref{thm:conj} for the precise statement. Note that, the choices of $\Psi$ in the above theorem 
	will correspond to quasi-homogeneous polynomials.

	In Section~\ref{sec:cusp} we study the case of cusp singularities.
	In the workshop of Singularities at Oberwolfach 2021, Prof.  Duco van Straten asked a question to
	the second author on the construction of matrix factorizations for cusp singularities. 
	We obtain a partial answer to his question, in Theorem~\ref{th:Cusp}. In particular,
	we produce families of matrix factorizations for families of cusp singularities. By fixing some numbers, this theorem also recovers the cubic studied by Etingof and Ginzburg~\cite{ginz}.  In Section~\ref{sec:nonIsolated} we study matrix factorization of non-isolated singularities
	and generalize a result of Baciu~\cite{baci}.

	\section{Preliminaries}
	In this section, we recall the notion of matrix factorization of hypersurface singularities.
	We observe how this relates to the space of maximal Cohen-Macaulay modules (Theorem \ref{Th:EisTh}).
	Finally, we recall basics on degeneracy modules (Proposition \ref{prop:corr}). 
	This gives us a new approach to studying matrix factorizations, which will be used in later sections for 
	explicit computations.
	
	\subsection{Setup}\label{se01}
	Fix an integer $n \ge 3$.
	Let $X$ be an integral, normal hypersurface in $\cn$.  Denote by $\mo_{\cn}:= \mb{C}[[X_1,...,X_n]]$ and $F \in \mo_{\cn}$
	defining the hypersurface $X$ and $\mo_X:=\mo_{\cn}/(F)$ the associated coordinate ring. 
	Note that $X$ may have \emph{non-isolated} singularities.
	
	\subsection{Matrix factorization}
			 A \emph{matrix factorization} of $F$ is an ordered pair of $m \times m$-matrices $(\Phi, \Psi)$ with entries in $\mo_{\cn}$ such that the matrix multiplication satisfies:
		\begin{equation*}
			\Phi \cdot \Psi = F \cdot \mathrm{Id}_m, \quad \Psi \cdot \Phi = F \cdot \mathrm{Id}_m,
		\end{equation*}
		where $\mathrm{Id}_m$ is the $m\times m$ identity matrix.
				The matrix factorization is \emph{reduced} if and only if
		\begin{equation*}
			\im (\Phi: \mo_{\cn}^{\oplus m} \to \mo_{\cn}^{\oplus m}) \subset \mathfrak{m} \mo_{\cn}^{\oplus m} \quad \text{and} \quad \im(\Psi: \mo_{\cn}^{\oplus m} \to \mo_{\cn}^{\oplus m}) \subset \mathfrak{m} \mo_{\cn}^{\oplus m},
		\end{equation*}
	where $\mf{m}$ is the maximal ideal of $\mo_{\cn}$. Recall, the following classical result on matrix factorization:
	
	\begin{thm}
		\label{Th:EisTh}
		There is a one-to-one correspondence between:
		\begin{enumerate}
			\item equivalence classes of reduced matrix factorizations of $F$.
			\item isomorphism classes of non-trivial periodic minimal free resolutions of $\mo_X$-modules of periodicity two.
			\item maximal Cohen-Macaulay $\mo_X$-modules without free summands.
		\end{enumerate}
	\end{thm}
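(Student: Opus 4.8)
The plan is to follow Eisenbud's original argument~\cite{eish}, organized around two functorial constructions that turn out to be mutually inverse. Throughout write $R:=\mo_{\cn}$, a regular local ring of dimension $n$, and $S:=\mo_X=R/(F)$, so that $\dim S=n-1$. The equivalence of (1) and (2) is essentially built into the construction, since a $2$-periodic minimal free resolution \emph{is} the data of a pair of matrices, so the real content is matching these with maximal Cohen--Macaulay modules, i.e.\ (1)/(2) $\leftrightarrow$ (3).

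First I would construct the passage from a reduced matrix factorization to a module. Given $(\Phi,\Psi)$, set $M:=\mathrm{coker}(\Phi\colon R^{\oplus m}\to R^{\oplus m})$. Since $\Phi\cdot\Psi=F\cdot\mathrm{Id}_m$, the element $F$ annihilates $M$, so $M$ is naturally an $S$-module. Taking determinants gives $\det(\Phi)\det(\Psi)=F^m\neq 0$; as $R$ is a domain this forces $\det(\Phi)\neq 0$, hence $\Phi$ is injective and $0\to R^{\oplus m}\xrightarrow{\Phi}R^{\oplus m}\to M\to 0$ is a free resolution, so $\mathrm{pd}_R M=1$ (the reduced hypothesis makes $\Phi$ non-invertible, so $M\neq 0$). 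The Auslander--Buchsbaum formula then yields $\mathrm{depth}_R M=n-1=\dim S$, and since depth over $R$ and over $S$ agree, $M$ is maximal Cohen--Macaulay over $S$; the condition $\im\Phi\subset\mf{m}R^{\oplus m}$ is exactly what rules out free $S$-summands. Reducing $\Phi,\Psi$ modulo $F$ gives $\bar\Phi\,\bar\Psi=\bar\Psi\,\bar\Phi=0$, hence a $2$-periodic complex of free $S$-modules augmented to $M$; the same injectivity argument over $R$ shows $\ker\bar\Phi=\im\bar\Psi$ and $\ker\bar\Psi=\im\bar\Phi$, so this complex is exact and is the minimal free resolution required in (2).

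Next I would reverse the construction. Let $M$ be a maximal Cohen--Macaulay $S$-module without free summands, viewed as an $R$-module. Because $F$ kills $M$, it is $R$-torsion with $\mathrm{depth}_R M=\mathrm{depth}_S M=n-1$, so Auslander--Buchsbaum gives $\mathrm{pd}_R M=1$; comparing ranks in a minimal $R$-free resolution (the alternating sum of ranks vanishes since $M$ is torsion) produces $0\to R^{\oplus m}\xrightarrow{\Phi}R^{\oplus m}\to M\to 0$ with square $\Phi$. The crucial step is that multiplication by $F$ on this resolution lifts the zero endomorphism $F\cdot\mathrm{id}_M=0$ and is therefore null-homotopic; the homotopy is a single map $\Psi\colon R^{\oplus m}\to R^{\oplus m}$ satisfying $\Phi\Psi=\Psi\Phi=F\cdot\mathrm{Id}_m$, which is exactly a matrix factorization, reduced because the resolution is minimal.

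Finally I would verify that the two constructions are mutually inverse and descend to the stated classes: equivalent matrix factorizations (conjugate by invertible $R$-matrices) have isomorphic cokernels, and conversely an isomorphism of cokernels lifts to an equivalence of minimal resolutions, matching the periodic resolutions of (2). I expect the main obstacle to be the null-homotopy/periodicity step together with the depth bookkeeping: one must know that over the hypersurface $S$ every maximal Cohen--Macaulay module has projective dimension exactly $1$ over the ambient regular ring $R$, and that multiplication by $F$ being null-homotopic produces the second matrix $\Psi$, again with entries in $\mf{m}$. These two points are the technical heart; the remaining checks---injectivity of $\Phi$, exactness of the periodic complex, and compatibility with equivalence---are then formal.
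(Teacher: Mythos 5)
Your proposal is correct and is essentially Eisenbud's original argument, which is precisely what the paper invokes: it gives no proof of its own beyond citing \cite[Corollary 6.3]{eish}, and its post-theorem sketch of the direction (3)$\Rightarrow$(1) (Auslander--Buchsbaum to get $\mathrm{pd}_{\mo_{\cn}}M=1$, then lifting $F\cdot e_i$ through $\Phi$ to produce $\Psi$) coincides with your construction. One small slip worth fixing: it is the condition $\im\Psi\subset\mf{m}\,\mo_{\cn}^{\oplus m}$, not the one on $\Phi$, that rules out free summands of $\mathrm{coker}(\Phi)$ (a free summand $\mo_X$ contributes a block $(F,1)$, i.e.\ a unit entry in $\Psi$), whereas the condition on $\Phi$ is what makes the resolution minimal.
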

	
	\begin{proof}
	 See \cite[Corollary $6.3$]{eish} for a proof.
	\end{proof}
	
	In this article, we will exploit the equivalence between $(1)$ and $(3)$
	in Theorem \ref{Th:EisTh}. 
	So, we briefly recall how one associates a matrix factorization
	of $F$ to a maximal Cohen-Macaulay module without free summands.
	Let $M$ be a maximal Cohen-Macaulay $\mo_X$-module without free summands. This implies that the depth of $M$ equals the dimension of $X$, which is $n-1$.
	By the Auslander-Buchsbaum formula, this means as an $\mo_{\cn}$-module, the projective dimension of $M$ equals $1$. This implies we have a short exact sequence of the form
	\begin{equation}\label{eq:std}
		0 \to \mo_{\cn}^{\oplus b} \xrightarrow{\Phi} \mo_{\cn}^{\oplus a} 
		\xrightarrow{(m_1,...,m_a)} M \to 0
	\end{equation}
	where $m_i \in M$ and the standard basis element $e_i \in \mo_{\cn}^{\oplus a}$ maps to $m_i$ for $1 \le i \le a$.
	Since $M$ is supported on $X$, we have $a=b$. Then the morphism $\Phi$ is simply given by an $a \times a$-matrix with entries in $\mo_{\cn}$. Suppose that this is a minimal resolution of $M$. Since $M$ is annihilated by $F$, 
    for every $1 \le i \le a$, $Fe_i \in \mo_{\cn}^{\oplus a}$ maps to
	zero in $M$. By the exactness of \eqref{eq:std}, 
	there exists $\Psi(e_i) \in \mo_{\cn}^{\oplus a}$ such that 
	$\Phi \circ \Psi(e_i)=Fe_i$. In other words, there exists an $a \times a$-matrix $\Psi$ with entries in $\mo_{\cn}$ such that $\Phi \cdot \Psi = F \cdot \mathrm{Id}_a$. 
		Therefore, $(\Psi,\Phi)$ is a matrix factorization of $F$.

	\subsection{Degeneracy module}
	Let $M$ be a maximal Cohen-Macaulay $\mo_X$-module of rank, say $r$.
	Given an $r$-tuple of sections $\un{s}:=(s_1,...,s_r)$ of $M$, the 
	associated \emph{degeneracy locus} is the zero locus of the section 
	$s_1 \wedge s_2 \wedge ... \wedge s_r \in \wedge^r M$ i.e., the locus 
	of points where the $r$-tuple of sections is linearly dependant. 
	Consider the morphism \[\un{s}: \mo_X^{\oplus r} \to M,\]
	sending a standard basis vector $e_i$ of $\mo_X^{\oplus r}$ to $s_i$.
	Denote by $\mc{C}_{\un{s}}$ the cokernel of the morphism $\un{s}$.
	Note that, the support of $\mc{C}_{\un{s}}$ is the associated degeneracy locus.
	For a 
	general choice of $r$-sections $\un{s}:= (s_1,...,s_r)$ the associated degeneracy
	locus $\mr{Supp}(\mc{C}_{\un{s}})$ is reduced and Cohen-Macaulay of 
	codimension $1$ (see \cite[p. $431$]{eis32}).
	Furthermore, by the genericity of the $r$-tuple of sections, 
	the locus where $r-1$ of the $r$-sections are linearly dependant is of codimension
	$2$ (see \cite[Lemma $5.2$]{eis32}). This implies that the cokernel $\mc{C}_{\un{s}}$
	is supported on a reduced Cohen-Macaulay subvariety of codimension $1$ in $X$   
	and is of rank $1$ over its support. 
	The cokernel $\mc{C}_{\un{s}}$ will be called the \emph{degeneracy module associated to the 
		$r$-tuple of sections} $\un{s}:= (s_1,...,s_r)$.
	This motivates the following definition:
	
	\begin{defi}
		We will call an $r$-tuple of sections $\un{s}:=(s_1,...,s_r)$ of $M$ 
		\emph{weakly general} if the cokernel $\mc{C}_{\un{s}}$ of 
		the induced morphism $\un{s}$ is supported on a reduced Cohen-Macaulay subvariety of $X$ 
		of codimension $1$  
		and is a rank $1$, Cohen-Macaulay $\mo_X$-module over $\mr{Supp}(\mc{C}_{\un{s}})$.
	\end{defi}
	
	\subsection{Dualizing degeneracy modules}
	Let $M$ be a maximal Cohen-Macaulay $\mo_X$-module of rank $r$ and 
	$\un{s}:=(s_1,...,s_r)$ be an $r$-tuple of weakly general sections of $M$.
	By definition, we have a short exact sequence of the form:
	\begin{equation}\label{eq06}
		0 \to \mo_X^{\oplus r} \xrightarrow{\un{s}} M \to \mc{C}_{\un{s}} \to 0,
	\end{equation}
	for some Cohen-Macaulay $\mo_X$-module $\mc{C}_{\un{s}}$ supported on a reduced
	Cohen-Macaulay subvariety in $X$ and is of rank $1$ over its support.
	Dualizing this exact sequence, we get 
	\begin{equation}\label{eq05}
		0 \to M^{\vee} \to \mo_X^{\oplus r} \xrightarrow{\un{s}'}
		\mc{E}xt^1_X(\mc{C}_{\un{s}},\mo_X) \to 0,
	\end{equation}
	where the surjectivity on the right follows from the vanishing of 
	$\mc{E}xt^1_X(M,\mo_X)$ (see \cite[Theorem $3.3.10$]{brun1}).
	Throughout this article, we shall denote $\mc{A}_{\un{s}}:= \mc{E}xt^1_X(\mc{C}_{\un{s}},\mo_X)$.
	Note that, dualizing \eqref{eq05} and using $\mc{E}xt^1_X(\mc{A}_{\un{s}},\mo_X) \cong \mc{C}_{\un{s}}$
	and $M^{\vee \vee} \cong M$ (see \cite[Theorem $3.3.10$]{brun1}), we get back the exact 
	sequence \eqref{eq06}. This implies:
	\begin{prop}\label{prop:corr}
		There is a $1-1$ correspondence between pairs:
		\[\begin{Bmatrix}
			& (M,\un{s}) \mbox{ where } M \mbox{ is a MCM}\\
			& \mo_X\mbox{-module of rank } r \mbox{ and }\\
			& \un{s}:=(s_1,...,s_r) \mbox{ is an } r\mbox{-tuple}\\
			& \mbox{ of weakly general sections of } M
		\end{Bmatrix}\longleftrightarrow
		\begin{Bmatrix}
			& (\mc{A}_{\un{s}}, \un{s}') \mbox{ where } \mc{A}_{\un{s}} \mbox{ is a CM }\mo_X\mbox{-module}\\
			& \mbox{supported on a CM subvariety of}\\ 
			& \mbox{codimension one in } X \mbox{ and of rank } 1\\
			& \mbox{ over the support } \mbox{and generated by } \un{s}'
		\end{Bmatrix}\]
		where the bijection follows from \eqref{eq06} and \eqref{eq05}.
	\end{prop}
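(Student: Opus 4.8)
The plan is to exhibit both directions of the claimed bijection explicitly as dualizations and then to verify that they are mutually inverse by biduality. The forward map is essentially already constructed in the discussion preceding the statement: from a pair $(M,\underline{s})$ the weak generality of $\underline{s}$ produces the short exact sequence \eqref{eq06}, and applying $\mathcal{H}om_X(-,\mathcal{O}_X)$ yields \eqref{eq05}, whose cokernel $\mathcal{A}_{\underline{s}}=\mathcal{E}xt^1_X(\mathcal{C}_{\underline{s}},\mathcal{O}_X)$ is by construction generated by the image $\underline{s}'$ of the standard basis of $\mathcal{O}_X^{\oplus r}$. First I would check that this output lies on the right-hand side. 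The exactness of \eqref{eq05} in this shape rests on two vanishings: $\mathcal{H}om_X(\mathcal{C}_{\underline{s}},\mathcal{O}_X)=0$ (a torsion sheaf mapping into the torsion-free $\mathcal{O}_X$) and $\mathcal{E}xt^1_X(M,\mathcal{O}_X)=0$ (since $M$ is maximal Cohen-Macaulay over the Gorenstein ring $\mathcal{O}_X$). Since $X$ is a hypersurface, hence Gorenstein, and $\mathcal{C}_{\underline{s}}$ is Cohen-Macaulay of codimension one, \cite[Theorem $3.3.10$]{brun1} gives that $\mathcal{A}_{\underline{s}}$ is again Cohen-Macaulay of codimension one with the same support; local duality at the generic point of each component of that support shows $\mathcal{A}_{\underline{s}}$ has rank one there.

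For the inverse map I would start from a pair $(\mathcal{A},\underline{s}')$ of the right-hand type. As $\mathcal{A}$ is generated by $\underline{s}'$, the map $\underline{s}'\colon\mathcal{O}_X^{\oplus r}\to\mathcal{A}$ is surjective; let $K$ be its kernel, giving $0\to K\to\mathcal{O}_X^{\oplus r}\to\mathcal{A}\to 0$, and set $M:=K^{\vee}$, recording the images of the standard basis as the sections $\underline{s}$. The one point needing care is that $M$ is genuinely maximal Cohen-Macaulay. I would obtain this from the depth lemma applied to the kernel sequence: $\mathcal{O}_X^{\oplus r}$ has depth $\dim X$ and $\mathcal{A}$ has depth $\dim X-1$, so $K$ has depth $\dim X$, i.e. $K$ is maximal Cohen-Macaulay; dualizing over the Gorenstein ring $\mathcal{O}_X$ then keeps $M=K^{\vee}$ maximal Cohen-Macaulay and reflexive. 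Dualizing the kernel sequence and using $\mathcal{H}om_X(\mathcal{A},\mathcal{O}_X)=0$, $K^{\vee\vee}\cong K$, and $\mathcal{E}xt^1_X(\mathcal{A},\mathcal{O}_X)=:\mathcal{C}$ produces $0\to\mathcal{O}_X^{\oplus r}\to M\to\mathcal{C}\to 0$ of the shape \eqref{eq06}, which simultaneously exhibits $\underline{s}$ as weakly general.

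Finally I would check that the two constructions are mutually inverse, and here the computation is essentially recorded before the statement. Starting from $(M,\underline{s})$, the forward map gives \eqref{eq05}; applying $\mathcal{H}om_X(-,\mathcal{O}_X)$ to \eqref{eq05} and invoking $\mathcal{E}xt^1_X(\mathcal{A}_{\underline{s}},\mathcal{O}_X)\cong\mathcal{C}_{\underline{s}}$ and $M^{\vee\vee}\cong M$ (\cite[Theorem $3.3.10$]{brun1}) returns \eqref{eq06} verbatim, so the backward map recovers $(M,\underline{s})$. Running the composite in the other order is the symmetric computation: from $(\mathcal{A},\underline{s}')$ one builds $0\to\mathcal{O}_X^{\oplus r}\to M\to\mathcal{C}\to 0$ with $\mathcal{C}=\mathcal{E}xt^1_X(\mathcal{A},\mathcal{O}_X)$, and the forward map dualizes this to $0\to M^{\vee}\to\mathcal{O}_X^{\oplus r}\to\mathcal{E}xt^1_X(\mathcal{C},\mathcal{O}_X)\to 0$, where $\mathcal{E}xt^1_X(\mathcal{C},\mathcal{O}_X)\cong\mathcal{A}$ and $M^{\vee}\cong K$, reproducing the original generating sequence and hence $(\mathcal{A},\underline{s}')$. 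I expect the only genuine subtleties to be the two well-definedness checks—that $\mathcal{A}_{\underline{s}}$ carries the asserted rank-one, codimension-one Cohen-Macaulay structure and that the reconstructed $M$ is maximal Cohen-Macaulay—both of which reduce to the duality theory of Cohen-Macaulay modules over the Gorenstein ring $\mathcal{O}_X$ together with the depth lemma; the involutivity itself is formal once biduality is in hand.
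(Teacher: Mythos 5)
Your proposal is correct and takes essentially the same route as the paper: the paper sets up exactly this dualization argument via \eqref{eq06} and \eqref{eq05} together with the biduality identifications $\mathcal{E}xt^1_X(\mc{A}_{\un{s}},\mo_X)\cong\mc{C}_{\un{s}}$ and $M^{\vee\vee}\cong M$, and then defers the remaining details to \cite{BoRo}. Your additional verifications (vanishing of $\Hc_X(\mc{C}_{\un{s}},\mo_X)$ and $\mc{E}xt^1_X(M,\mo_X)$, the depth lemma for the kernel in the backward direction, and Cohen--Macaulay duality over the Gorenstein hypersurface) are precisely the ingredients that reference supplies, so nothing is missing.
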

	
	\begin{proof}
	 See \cite{BoRo} for a detailed proof.
	\end{proof}

	\begin{defi}
		Given a pair $(M,\un{s})$ with $M$ a maximal Cohen-Macaulay
		$\mo_X$-module of rank $r$ and $\un{s}$ an $r$-tuple of weakly general sections of $M$, we will 
		call the corresponding pair $(\mc{A}_{\un{s}},\un{s}')$ as in Proposition \ref{prop:corr},
		the \emph{degenerate pair associated to} $(M,\un{s})$.
	\end{defi}

	\section{McKay-type correspondence for quasi-homogeneous singularities}
	Quasi-homogeneous hypersurface singularities are generalizations of homogeneous singularities. 
	We study $\mb{C}^*$-divisors contained in such hypersurfaces. We observe that every effective, integral divisor is 
	either a $\mb{C}^*$-divisor or is CI-linked (in the sense of Definition \ref{defi:ci}) 
	to a $\mb{C}^*$-divisor (Theorem \ref{thm:ci}).
	Using this we observe that there is a $1-1$ correspondence between $\mb{C}^*$-divisors (modulo linear equivalence)
	and rank one 
	reflexive sheaves
	on a quasi-homogeneous hypersurface (Theorem \ref{thm:ref}). Furthermore, if the dimension 
	of the hypersurface is two, then we can express every maximal Cohen-Macaulay modules solely in terms 
	of the ideal sheaves of $\mb{C}^*$-curves and certain skyscraper sheaves (Theorem \ref{thm:mck}).
	
	\subsection{Quasi-homogeneous hypersurfaces}\label{sec:quasi}
	A polynomial $F \in \mb{C}[[X_1,X_2,...,X_n]]$ is called \emph{quasi-homogeneous}
	if there exists positive integers $(\omega_1, \omega_2,...,\omega_n,d)$ such that 
	for any $\lambda \in \mb{C}^*$, we have $F(\lambda X_1, \lambda X_2,...,\lambda X_n)=\lambda^d F(X_1,....,X_n)$.
	The hypersurface $X$ defined by $F$ is called a \emph{quasi-homogeneous hypersurface with weights} 
	$\un{\omega}:= (\omega_1, \omega_2,...,\omega_n)$. Note that, there is a natural $\mb{C}^*$-action on $X$:
	\[\mb{C}^* \times X \to X \mbox{ sending } (\lambda, (x_1,...,x_n)) \mapsto (\lambda^{\omega_1} x_1, 
	 \lambda^{\omega_2} x_2,...,\lambda^{\omega_n} x_n).	\]
    Throughout this section we assume that $X$ has only isolated singularity at the origin $0$.
    Denote by $\mb{P}^{\un{\omega}}_{X^*}$ the quotient of $X^*:=X \backslash \{0\}$
    by the $\mb{C}^*$-action. Consider the resulting quotient map:
    \begin{equation}\label{eq:quot}
     \pi_X: X^* \to \mb{P}^{\un{\omega}}_{X^*}.
    \end{equation}

    \subsection{$\mb{C}^*$-divisors}\label{sec:div}
    Let $X$ be a quasi-homogeneous hypersurface of dimension $n$
    with weights $\un{\omega}:= (\omega_1, \omega_2,...,\omega_n,d)$.
    Note that,
    given a closed point $(a_1,a_2,...,a_n) \in X$, the associated $\mb{C}^*$-curve is the parametric curve 
    given by:
    \[n: \mb{C}^* \to X \mbox{ sending } \lambda \mapsto (\lambda^{\omega_1} a_1, 
	 \lambda^{\omega_2} a_2,... \lambda^{\omega_n} a_n).	\]
    We will denote by $[a_1,a_2,...,a_n]$ the corresponding point on $\mb{P}^{\un{\omega}}_{X^*}$. 
    Clearly, the fiber over $[a_1,...,a_n]$ to the morphism $\pi_X$
    is an integral curve and $n$ is the normalization map for the fiber.
    This implies that the preimage under $\pi_X$ of an integral divisor in $\mb{P}^{\un{\omega}}_{X^*}$ is irreducible.
        
    An integral divisor $D$ in $X^*$ is called a $\mb{C}^*$-\emph{divisor} if there exists an integral Weil divisor 
    $D'$ in $\mb{P}^{\un{\omega}}_{X^*}$ such that $D \cong \pi_X^{-1}(D')_{\red}$, where 
    $\pi_X$ is as in \eqref{eq:quot}. An integral divisor in $X$ is called $\mb{C}^*$-\emph{divisor} if 
    it is the closure of an integral $\mb{C}^*$-divisor on $X^*$. Denote by $\mc{D}(X)$ the free abelian group generated by 
    integral $\mb{C}^*$-divisors in $X$, modulo linear equivalence. Elements of $\mc{D}(X)$ will be called $\mb{C}^*$-\emph{divisors} on $X$.

	\subsection{Liaisons and residual divisors}
	Let $(X,0)$ be an isolated, quasi-homogeneous hypersurface singularity with weights 
	$\un{\omega}:= (\omega_1, \omega_2,...., \omega_n, d)$.
	Consider the quotient map $\pi_X$ as in \eqref{eq:quot} from the regular locus of $X$ to quotient by the 
	$\mb{C}^*$-action. 
	
	\begin{defi}
	 An integral divisor $D \subset X$ is called \emph{horizontal} if the composition 
	 \[D\backslash \{0\} \subset X \backslash \{0\} \xrightarrow{\pi_X} \mb{P}^{\un{\omega}}_{X^*}\]
	 is dominant.
	\end{defi}

	\begin{defi}\label{defi:ci}
	 Two distinct divisors $D, E$ are called \emph{CI-linked} if there exists a 
	 polynomial $g \in \mb{C}[[X_1,X_2,...,X_n]]$
	 such that $D \cup E = Z(g) \cap X$, 
	 where $Z(g)$ denotes the zero locus of $g$.
	 Moreover, if $D$ and $E$ are CI-linked then we call $D$ \emph{residual to} $E$ (and vice versa, $E$ is 
	 residual to $D$). This terminology is inspired by the classical theory of liaisons (see \cite{pesk}).
	\end{defi}

	\begin{thm}\label{thm:ci}
	 Let $D \subset X$ be an integral
	 horizontal divisor. Then, there exist a $\mb{C}^*$-divisor $E \subset X$
	 such that $D$ is CI-linked to $E$.
	\end{thm}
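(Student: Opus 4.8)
The plan is to obtain the linkage directly from liaison theory, reducing the whole statement to the construction of a single function $g$ that vanishes on $D$ and whose residual locus is $\mb{C}^*$-invariant. Since $X$ is a hypersurface it is Cohen--Macaulay, so for any $g \in \mo_{\cn}$ with $g \notin (F)$ the element $g$ is a nonzerodivisor on $\mo_X$; hence $\mo_X/(g)$ is Cohen--Macaulay of pure codimension one, meaning $Z(g)\cap X$ has no embedded components and equals the union of its codimension-one components, and it is cut out by the codimension-two complete intersection $(F,g) \subset \mo_{\cn}$. Thus, if I can find $g \in I_D$ (so that $D \subseteq Z(g)\cap X$) such that every component of $Z(g)\cap X$ other than $D$ is $\mb{C}^*$-invariant, then $E := \overline{(Z(g)\cap X)\setminus D}$ is a $\mb{C}^*$-divisor and $D \cup E = Z(g)\cap X$, which is exactly CI-linkage in the sense of Definition \ref{defi:ci} (the two divisors being distinct since $D$ is horizontal while $E$ is vertical).

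Second, I would translate ``$E$ is $\mb{C}^*$-invariant'' into a statement over the generic point $\eta$ of $P := \mb{P}^{\un{\omega}}_{X^*}$. A divisor $E \subset X$ is $\mb{C}^*$-invariant if and only if it does not dominate $P$: if $E$ does not dominate $P$ then $\dim \overline{\pi_X(E)} \le \dim P - 1$, so the general fibre of $E \to \overline{\pi_X(E)}$ is positive dimensional and, being contained in a one-dimensional orbit, equals a full orbit; hence $E$ is a union of orbit closures, i.e. a $\mb{C}^*$-divisor. Consequently it suffices to choose $g \in I_D$ so that $D$ is the \emph{only} component of $Z(g)\cap X$ that dominates $P$; equivalently, over $\eta$ the restriction of $Z(g)\cap X$ to the generic orbit should coincide with that of $D$.

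Third comes the construction of $g$, which is the technical heart. Working with the graded model $R := \mb{C}[X_1,\dots,X_n]/(F)$ of the cone, the generic orbit of $\pi_X$ is isomorphic to $\mb{G}_m$, so $K(X) = K(P)(\tau)$ for a fibre coordinate $\tau$, which I realize as a ratio $\tau = u/v$ of homogeneous elements of $R$ of distinct degrees. Because $D$ is integral and horizontal, its generic fibre $D \cap \pi_X^{-1}(\eta)$ is an integral closed point of the orbit $\Spec K(P)[\tau,\tau^{-1}]$, hence cut out by an irreducible monic polynomial $\phi(\tau) = \tau^e + a_{e-1}\tau^{e-1} + \dots + a_0 \in K(P)[\tau]$ with $a_0 \neq 0$, where $e$ is the degree of $D \to P$. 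Clearing the denominators of the $a_i \in K(P)$ by a common homogeneous $c \in R$ and substituting $\tau = u/v$, I set $g := \sum_{i=0}^{e} (c\,a_i)\, u^{i} v^{e-i} \in R$, so that $g = c\,v^{e}\,\phi(\tau)$ as rational functions on $X$. Then $g$ vanishes on $D$ (since $\phi(\tau)|_D = 0$ and $D \not\subset V(cv)$), while on the general orbit $c$ and $v$ are units, so $V(g)$ meets it exactly in $V(\phi(\tau)) = D$. Hence the only horizontal component of $Z(g)\cap X$ is $D$, and by the second step the residual $E$ is $\mb{C}^*$-invariant; should $E$ be empty, I replace $g$ by $t\,g$ for a homogeneous $t$ of positive degree with $t \notin I_D$, which adjoins the nonempty $\mb{C}^*$-divisor $V(t)\cap X$ to $E$.

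I expect the main obstacle to be precisely this third step: ruling out any \emph{extra} horizontal component of $Z(g)\cap X$ besides $D$. The generic-fibre analysis above is what forces all other components to be vertical, and the remaining care lies in the bookkeeping for orbits with nontrivial (orbifold) isotropy, where the generic fibre is $\mb{G}_m/\mu_k$ rather than $\mb{G}_m$ and $\tau$ must be chosen accordingly, and in matching the algebraic cone $\Spec R$ with the germ $(X,0)$ so that the polynomial $g$ is a genuine witness to CI-linkage. Once $g$ is produced, the liaison formalism of the first step together with the dimension count of the second step conclude the proof.
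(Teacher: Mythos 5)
Your proposal is correct, but it takes a genuinely different route from the paper's. The paper argues as follows: by generic smoothness it chooses a dense affine open $U \subset \mb{P}^{\un{\omega}}_{X^*}$ over which $\pi_X$ is smooth, asserts that the smooth affine variety $\pi_X^{-1}(U)$ has trivial Picard group so that the ideal of $D \cap \pi_X^{-1}(U)$ is generated by a single function $f$, extends $f$ to a global regular function $\wt{f} \in \mo_X$ with the same zero locus over $\pi_X^{-1}(U)$, and then notes that the residual components of $Z(\wt{f})$ lie over the complement of $U$, hence are non-dominant and are identified as $\mb{C}^*$-divisors by the fibre-dimension argument (your step two is exactly this last part of the paper's proof). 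You instead localize all the way to the generic fibre $X_\eta \cong \Spec K(P)[\tau,\tau^{-1}]$, take the monic irreducible equation $\phi(\tau)$ of $D_\eta$ there, and clear denominators using the grading to produce a global $g$ meeting the generic fibre exactly in $D_\eta$; this is the paper's argument ``in the limit over all saturated opens.'' What your route buys is real: over the generic fibre, principality of the ideal of $D_\eta$ is automatic because $K(P)[\tau,\tau^{-1}]$ is a PID, whereas the paper's justification that $\mr{Pic}(\pi_X^{-1}(U))=0$ merely because $\pi_X^{-1}(U)$ is smooth and affine is false in general --- already for the cone over a smooth plane cubic, $\mr{Pic}(\pi_X^{-1}(U))$ is a quotient of the uncountable group $\mr{Pic}(U)$ by the class of $\mo(1)$ and never vanishes --- so your generic-fibre localization in effect repairs a gap in the paper's argument. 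The price is the bookkeeping you flag yourself: one must normalize $\gcd(\omega_1,\dots,\omega_n)=1$ (harmless after reparametrizing the action) so that $\tau=u/v$ can be taken of weight one, making $X_\eta$ honestly $\mb{G}_m$ over $K(P)$ rather than a $\mu_k$-quotient, and one must identify the germ with the affine cone, an identification the paper also uses silently; note also that your $g$ is not quasi-homogeneous, which is permitted by Definition \ref{defi:ci}. With these normalizations your three steps, including the replacement $g \mapsto tg$ ensuring the residual divisor is nonempty (and automatically distinct from $D$, being vertical), are sound.
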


	\begin{proof}
	 Consider the quotient map $\pi_X$ from $X^*$ to $\mb{P}^{\un{\omega}}_{X^*}$ as in 
	 \eqref{eq:quot}. By the theorem on generic smoothness, there exists an open dense 
	 affine subscheme $U \subset \mb{P}^{\un{\omega}}_{X^*}$ such that the resulting morphism 
	 from $\pi_X^{-1}(U)$ to $U$ is smooth. Since $\pi_X$ is an affine morphism and 
	 $U$ is affine, we have $\pi_X^{-1}(U)$ is affine and non-singular. This implies 
	 $\mr{Pic}(\pi_X^{-1}(U))=0$. As $D$ is an integral horizontal divisor, $U_D:= D \cap \pi_X^{-1}(D)$
	 is a non-empty Cartier divisor in $\pi_X^{-1}(U)$. Since  $\mr{Pic}(\pi_X^{-1}(U))=0$, the ideal sheaf
	 $\I_{U_D}$ is simply $f.\mo_{\pi^{-1}_X(U)}$ for some $f \in \mo_{\pi^{-1}_X(U)}$.
	 By \cite[Lemma II.$5.3$]{R1}, there exists a regular function $\wt{f} \in \mo_X$ such that 
	 $Z(\wt{f}) \cap \pi_X^{-1}(U)=Z(f) \cap \pi_X^{-1}(U)$. This implies that the zero locus $Z(\wt{f})$ of 
	 $\wt{f}$ is of the form 
	 \begin{equation}\label{eq:ci}
	  Z(\wt{f}) = \ov{Z(f)} \cup E
	 \end{equation}
	 where $\ov{Z(f)}$ is the closure in $X$ of the zero locus of $f$ and $E$ is a divisor lying 
	 in the complement $X \backslash \pi_X^{-1}(U)$. Since $E$ is a divisor and does not intersect $\pi_X^{-1}(U)$,
	 the scheme-theoretic image $\pi_X(E)$ of $E$ in $\mb{P}^{\un{\omega}}_{X^*}$ does not intersect $U$. 
	 Since the fibers of $\pi_X$ are irreducible and of dimension one, we conclude by the 
	 fiber dimension theorem that $E \cong \pi_X^{-1}(E')$ for some divisor $E'$ in $\mb{P}^{\un{\omega}}_{X^*}$.
	 In particular, $E$ is a $\mb{C}^*$-divisor. 
	 Moreover, as $D$ is integral and agrees with $Z(f)$ over $\pi_X^{-1}(U)$, we have 
	 $\ov{Z(f)} = D$. By \eqref{eq:ci}, this means $D$ is CI-linked to a $\mb{C}^*$-divisor $E$. 
	 This proves the theorem.
	\end{proof}

   \subsection{Rank one correspondence}\label{sec:rank1}
   Denote by $\mr{Ref}^{(1)}(X)$ the space of reflexive rank one sheaves on $X$. Let $i:X^* \to X$ be the natural inclusion.
   Recall, every reflexive sheaf of rank one on a regular variety is invertible (see \cite[Proposition $1.9$]{stabhar}).
   Moreover, every reflexive sheaf on $X$ arises as the pushforward via $i$ of a reflexive sheaf on $X^*$ (see 
   \cite[Proposition $1.6$]{stabhar}). This means that under pushforward by $i$,
   \[i_*: \mr{Pic}(X^*) \to \mr{Ref}^{(1)}(X) \mbox{ sending } \mc{L} \mbox{ to } i_*\mc{L}\]
   is an isomorphism. The group operation on $\mr{Pic}(X^*)$ induces one on $\mr{Ref}^{(1)}(X)$, namely 
   \[M . N:= i_*(i^*M \otimes_{\mo_{X^*}} i^*N) \mbox{ and } M^{\vee}:= i_*((i^*M)^{\vee}). \]

	\begin{thm}\label{thm:ref}
	The morphism \[\phi: \mc{D}(X) \to \mr{Ref}^{(1)}(X)\] sending a $\mb{C}^*$-divisor $D$ to the reflexive sheaf 
	$i_*(\mo_{X^*}(D \cap X^*))$ is an isomorphism of abelian groups.
	\end{thm}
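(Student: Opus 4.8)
The plan is to factor $\phi$ through $\mr{Pic}(X^*)$ and reduce the statement to a computation in the divisor class group of the smooth variety $X^*$. Since $X$ has only an isolated singularity at the origin, $X^*$ is regular, so $\mr{Pic}(X^*)=\mr{Cl}(X^*)$, and by the discussion in \S\ref{sec:rank1} the pushforward $i_*\colon \mr{Pic}(X^*)\to\mr{Ref}^{(1)}(X)$ is an isomorphism of abelian groups. Writing $\psi\colon \mc{D}(X)\to\mr{Cl}(X^*)$ for the map $D\mapsto \mo_{X^*}(D\cap X^*)$, we have $\phi=i_*\circ\psi$, so it suffices to show that $\psi$ is an isomorphism. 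That $\psi$ is a well-defined homomorphism is immediate: it is additive on $\mb{C}^*$-divisors, and linearly equivalent cycles restrict to linearly equivalent divisors on $X^*$.

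For surjectivity I would first record the dichotomy that every integral divisor $D\subset X^*$ is either a $\mb{C}^*$-divisor or horizontal. This is a fiber-dimension argument: the fibers of $\pi_X$ are one-dimensional, so if $\pi_X|_D$ is not dominant then $\dim\pi_X(D)\le \dim\mb{P}^{\un{\omega}}_{X^*}-1$ forces $D$ to contain the whole $\mb{C}^*$-orbit over each point of its image, whence $D=\pi_X^{-1}(\pi_X(D))_{\red}$ is a $\mb{C}^*$-divisor. Since $\mr{Cl}(X^*)$ is generated by the classes of integral divisors, it then remains to treat the horizontal ones. Given a horizontal $D$, Theorem~\ref{thm:ci} provides a $\mb{C}^*$-divisor $E$ and $\wt{f}\in\mo_X$ with $D\cup E=Z(\wt f)\cap X$, and the construction there (where $\I_{U_D}$ is the reduced ideal $f\cdot\mo_{\pi_X^{-1}(U)}$) gives $\wt f$ of multiplicity one along $D$; restricting $\mr{div}(\wt f)$ to $X^*$ therefore yields $[D]=-[E]$ in $\mr{Cl}(X^*)$. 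Hence $[D]$ lies in the subgroup generated by $\mb{C}^*$-divisors, and $\psi$ is surjective.

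The main obstacle is injectivity, which is where the quasi-homogeneous structure enters. Suppose $D=\sum_i n_i D_i$ is a combination of $\mb{C}^*$-divisors with $\psi(D)$ trivial, so that $\sum_i n_i(D_i\cap X^*)=\mr{div}(f)$ for some $f\in\mb{C}(X^*)^\times$. The key point is that $\mr{div}(f)$ is $\mb{C}^*$-invariant, being a sum of $\mb{C}^*$-divisors, and I claim this forces $f$ to be a $\mb{C}^*$-semi-invariant. Indeed, for each $\lambda\in\mb{C}^*$ the function $\sigma_\lambda^* f/f$ has trivial divisor, hence is a global unit on $X^*$; since $X$ is normal of dimension $\ge 2$ one has $\mo(X^*)=\mo(X)$, and as $\mo(X)$ is the positively graded coordinate ring of the quasi-homogeneous cone with degree-zero part $\mb{C}$, its units are exactly the nonzero constants. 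Thus $\sigma_\lambda^* f/f=c(\lambda)\in\mb{C}^*$, the assignment $\lambda\mapsto c(\lambda)$ is an algebraic character of $\mb{C}^*$, so $c(\lambda)=\lambda^k$ and $f$ is homogeneous of weight $k$. Consequently the relation $\sum_i n_i(D_i\cap X^*)=\mr{div}(f)$ is realized by a semi-invariant function and therefore already holds in $\mc{D}(X)$, giving $D=0$. Hence $\psi$ is injective.

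Combining the three steps shows $\psi$ is an isomorphism, and composing with $i_*$ gives that $\phi$ is an isomorphism of abelian groups. The delicate point throughout is the identification $\mo(X^*)^\times=\mb{C}^*$ underlying the semi-invariance claim: it is what lets one pass from an arbitrary rational function trivializing the class on $X^*$ to a genuinely $\mb{C}^*$-equivariant one, and it is precisely here that one uses that $X$ is the cone over a quasi-homogeneous singularity rather than a general normal surface. I would isolate this semi-invariance statement as a lemma preceding the proof of the theorem.
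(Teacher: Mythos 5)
Your proof is correct, and on the main step (surjectivity) it follows the paper's route exactly: the same fiber-dimension dichotomy between $\mb{C}^*$-divisors and horizontal divisors, followed by Theorem \ref{thm:ci} to dispose of the horizontal ones. One detail you supply that the paper elides is the multiplicity count: the paper simply asserts that a horizontal $D_i$ is linearly equivalent to $-D^c_i$, whereas you observe that since $f$ generates the reduced ideal $\I_{U_D}$, the extension $\wt{f}$ vanishes to order one along $D$, so that $[D]=-\sum_j b_j[E_j]$ with the $E_j$ integral $\mb{C}^*$-divisors; that remark is worth keeping. Where you genuinely diverge is injectivity. The paper reads ``modulo linear equivalence'' in the definition of $\mc{D}(X)$ as ordinary linear equivalence of divisors, so $\mc{D}(X)$ is by construction a subgroup of $\mr{Pic}(X^*)\cong\mr{Ref}^{(1)}(X)$ and injectivity of $\phi$ is immediate --- that is the entire content of the paper's two-line argument, and under that reading your semi-invariance lemma is redundant: the relation $\sum_i n_i(D_i\cap X^*)=\mr{div}(f)$ already kills the class in $\mc{D}(X)$ no matter what $f$ is. What your lemma buys is the strictly stronger statement that the equivariant and ordinary notions of linear equivalence agree on $\mb{C}^*$-divisors, i.e.\ every relation among them is realized by a semi-invariant function; this makes the theorem robust against the finer (equivariant) reading of the definition, and your proof of it is sound. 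One caveat you should make explicit when isolating the lemma: the unit computation $\mo(X^*)^\times=\mb{C}^*$ uses that $\mo(X)$ is a positively graded domain with degree-zero part $\mb{C}$, i.e.\ the affine polynomial model of $X$, not the formal model $\mb{C}[[X_1,\dots,X_n]]/(F)$ of \S\ref{se01}, where units can have arbitrary nonzero constant term. This is consistent with the setting actually in force in \S\ref{sec:quasi} (the $\mb{C}^*$-action and the quotient $\pi_X$ only exist there), but the hypothesis should be stated.
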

	
	\begin{proof}
 Clearly, this is a group homomorphism. Moreover, as $U$ is integral and regular, $\mc{D}(X)$ is contained in $\mr{Pic}(U)$.
 Since $\mr{Ref}^{(1)}(X)$ is isomorphic to $\mr{Pic}(U)$ as argued above, this means the morphism $\phi$ is injective.
 So it remains to check that $\phi$ is surjective.
 
 Consider $M \in \mr{Ref}^{(1)}(X)$. Note that, the restriction $M|_{X^*}$ is a reflexive sheaf. Since 
 $X^*$ is regular, this implies $M|_{X^*}$ is an invertible sheaf. In other words, 
 \[M|_{X^*} \cong \mo_{X^*}(D^*)\]
 for some divisor $D^*$ on $X^*$. Write $D^*=\sum_i a_iD_i$ as a linear combination of integral 
 divisors $D_i$. If $D_i$ is not horizontal, then by the fiber dimension theorem the scheme theoretic image $E_i$ of 
 $\pi_X|_{D_i}$ is a divisor in $\mb{P}^{\un{\omega}}_{X^*}$. Since $D_i$ is integral, $E_i$ is irreducible.
 This implies $\pi_X^{-1}(E_i)$ is irreducible  (see \S \ref{sec:div}). Hence, $D_i=\pi_X^{-1}(E_i)_{\red}$. 
 In other words, $D_i$ is a $\mb{C}^*$-divisor. 
 	If $D_i$ is horizontal, then by Theorem \ref{thm:ci} there exists a $\mb{C}^*$-curve $D^c_i$ such that 
	$D_i$ is linearly equivalent to $-D^c_i$. Therefore, $D^*$ is linearly equivalent to a divisor obtained as a linear combination of 
	$\mb{C}^*$-divisors. This proves surjectivity of $\phi$ and hence the theorem.
	\end{proof}

\subsection{Dimension two case}
Let $(X,x)$ be an isolated, quasi-homogeneous hypersurface singularity.
Suppose that $\dim X=2$. Denote by $k_x$ the skyscraper sheaf over the singular point $x$ of a one dimensional vector 
space.

\begin{thm}\label{thm:mck}
  Let $M$ be a maximal Cohen-Macaulay $\mo_X$-module of rank, say $r$.
 Then, for a general choice of $r$ sections $(s_1,...,s_r)$ of $M$, we have an exact sequence of the 
 form 
 \begin{equation}\label{eq:mck}
  0 \to \mo_X^{\oplus r-1} \xrightarrow{(s_1,...,s_{r-1})} M \to \mc{L} \to k_x^{\oplus m} \to 0
 \end{equation}
 for some non-negative integer $m$ and $\mc{L}$ is a reflexive sheaf on $X$ of rank $1$ i.e., 
 $\mc{L} \in \mr{Ref}^{(1)}(X)$. In particular, if $C$ denotes the support of the cokernel of the
 morphism $(s_1,...,s_r)$, then $\mc{L}$ is the dual of the ideal sheaf of $C$ in $X$.
\end{thm}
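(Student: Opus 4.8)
The plan is to analyze the degeneracy module construction from the preliminaries in the special case $\dim X = 2$, combining it with the rank-one correspondence (Theorem~\ref{thm:ref}) to identify $\mc{L}$ explicitly. The starting point is the degeneracy sequence~\eqref{eq06} specialized to a general $r$-tuple of sections: for $M$ a MCM module of rank $r$, a general choice $(s_1,\dots,s_r)$ yields a short exact sequence
\[
0 \to \mo_X^{\oplus r} \xrightarrow{(s_1,\dots,s_r)} M \to \mc{C}_{\un{s}} \to 0,
\]
where $\mc{C}_{\un{s}}$ is supported on a reduced Cohen--Macaulay curve $C$ of codimension one and is rank one over $C$. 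The goal is to peel off the last section: instead of using all $r$ sections to the cokernel, I would use the first $r-1$ of them, so that $M / \langle s_1,\dots,s_{r-1}\rangle$ is a rank-one sheaf $\mc{L}$, and then understand how $\mc{L}$ differs from $\mc{C}_{\un{s}}$ by the remaining section $s_r$.

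**First I would** set $Q := \mathrm{coker}((s_1,\dots,s_{r-1}): \mo_X^{\oplus r-1} \to M)$, which is generically rank one. Because the $(r-1)$-tuple is general, the locus where $s_1,\dots,s_{r-1}$ fail to be independent has codimension $\ge 2$ (this is exactly the genericity input cited from \cite[Lemma $5.2$]{eis32} in the degeneracy-module discussion), so $Q$ is torsion-free away from the singular point $x$ and agrees generically with a rank-one sheaf. Restricting to the regular locus $X^* = X\setminus\{x\}$, which is a smooth surface, $Q|_{X^*}$ is torsion-free of rank one, hence its double dual $(Q|_{X^*})^{\vee\vee}$ is a line bundle $\mo_{X^*}(D^*)$ where $D^*$ is the divisorial support $C\cap X^*$ of the remaining degeneracy. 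I would then define $\mc{L} := i_*\big((Q|_{X^*})^{\vee\vee}\big)$, which lies in $\mr{Ref}^{(1)}(X)$ by the identification $i_*\colon \mr{Pic}(X^*)\xrightarrow{\sim}\mr{Ref}^{(1)}(X)$ recorded in \S\ref{sec:rank1}. The natural map $Q \to \mc{L}$ is an isomorphism on $X^*$ away from codimension two and is injective, with cokernel a sheaf supported only at the point $x$ — a skyscraper $k_x^{\oplus m}$ for some $m \ge 0$. Splicing $Q \hookrightarrow \mc{L} \twoheadrightarrow k_x^{\oplus m}$ onto the surjection $M \twoheadrightarrow Q$ produces precisely the four-term sequence~\eqref{eq:mck}.

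**For the final identification** of $\mc{L}$ as the dual of the ideal sheaf of $C$, I would run the duality argument from the "Dualizing degeneracy modules" subsection. On the smooth surface $X^*$ the full degeneracy sequence dualizes to give $M^\vee$, and comparing the double dual of $Q$ against $\mc{C}_{\un{s}}$ shows that $(Q|_{X^*})^{\vee\vee} \cong \mo_{X^*}(C\cap X^*)$; pushing forward, $\mc{L}\cong i_*\mo_{X^*}(C\cap X^*)$, which by Theorem~\ref{thm:ref} is the image $\phi([C])$ and is canonically the dual of the ideal sheaf $\I_C$ of $C$ in $X$ (since $\I_C|_{X^*}=\mo_{X^*}(-C\cap X^*)$ and dualizing a reflexive rank-one sheaf on $X$ is computed on $X^*$). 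The reflexivity of $\mc{L}$ is automatic because it is $i_*$ of a line bundle on the regular locus.

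**The main obstacle** I expect is bookkeeping the codimension-two behavior at the singular point $x$ rigorously: I must justify that the cokernel of $Q \to \mc{L}$ (and of $M \to \mc{L}$ after the first $r-1$ generic sections) is genuinely supported at $x$ alone and is a finite-length module, rather than having a spurious divisorial or embedded component. This relies on depth/Cohen--Macaulayness bookkeeping — controlling $\mc{E}xt$ sheaves via \cite[Theorem $3.3.10$]{brun1} and the codimension-$\ge 2$ genericity of the degeneracy loci — and on the fact that on a normal surface the only non-divisorial support available is the isolated singular point. Once the support is pinned to $\{x\}$, finite length (hence the $k_x^{\oplus m}$ form) is immediate since $\mc{L}/Q$ is coherent with zero-dimensional support; the value of $m$ is then the length of this quotient and need not be computed explicitly for the statement.
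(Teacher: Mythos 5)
Your route is genuinely different from the paper's. The paper never forms the quotient by the first $r-1$ sections directly: it works entirely on the dual side, comparing $0 \to \I_{C|X} \to \mo_X \to \mo_C \to 0$ with the dualized degeneracy sequence $0 \to M^{\vee} \to \mo_X^{\oplus r} \to \mc{A}' \to 0$ via the maps $1 \mapsto e_1$ and $1 \mapsto t_1$, applying the snake lemma to get $0 \to \I_{C|X} \to M^{\vee} \to \mo_X^{\oplus r-1} \to k_x^{\oplus m} \to 0$, and then dualizing back using $M^{\vee\vee} \cong M$ and \cite[Theorem $3.3.10$]{brun1}. That argument yields \eqref{eq:mck} and the identification $\mc{L} \cong \I_{C|X}^{\vee}$ in one stroke, and the skyscraper shape survives the final dualization because $\mc{E}xt^2_X(k_x,\mo_X) \cong k_x$ on a Gorenstein surface. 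Your primal construction (reflexive hull of $Q := M/\langle s_1,\dots,s_{r-1}\rangle$ on $X^*$, then pushforward by $i_*$) is more geometric and avoids the diagram chase, but it concentrates all the difficulty into two claims that, as written, you do not establish.

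First, the injectivity of $Q \to \mc{L}$ (i.e.\ exactness of \eqref{eq:mck} at $M$) is asserted without proof: the kernel of $Q \to i_*(Q|_{X^*})$ is exactly the submodule of sections supported at $x$, so you need $\mr{depth}_x Q \geq 1$. This is fillable: apply the depth lemma to $0 \to \mo_X^{\oplus r-1} \to M \to Q \to 0$, both left terms being maximal Cohen--Macaulay. Second, and more seriously, your justification that the cokernel of $Q \to \mc{L}$ sits only at $x$ --- ``on a normal surface the only non-divisorial support available is the isolated singular point'' --- is false as stated: a normal surface has zero-dimensional closed subsets through its smooth points, and on a genuinely affine surface the locus where $r-1$ general sections become dependent is a nonempty finite set with no reason to lie at $x$; at any such point $Q|_{X^*}$ fails to be invertible and $\mc{L}/Q$ acquires support there, which would break \eqref{eq:mck}. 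What actually rescues the step is not normality but the locality of Setup \ref{se01} ($\mo_{\cn}=\CC[[X_1,\dots,X_n]]$): $X$ is the spectrum of a local ring, so $x$ is its unique closed point and every closed subset of dimension $0$ is contained in $\{x\}$; combined with the codimension-$2$ genericity from \cite[Lemma $5.2$]{eis32}, which you correctly invoke, the degeneracy locus of the $(r-1)$-tuple is empty on $X^*$, $Q|_{X^*}$ is already a line bundle, and your cokernel lands at $x$. (The paper's own proof faces the same issue, on the dual side, when it claims the cokernel of $\mo_C \to \mc{A}'$ is a skyscraper at $x$.) With these two repairs your argument goes through, including the identification $\mc{L} \cong i_*\mo_{X^*}(C \cap X^*) \cong \I_{C|X}^{\vee}$ via the section $s_r$. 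One last caveat: your parenthetical ``finite length, hence the $k_x^{\oplus m}$ form, is immediate'' overstates matters, since a finite-length module at $x$ need not be annihilated by $\mf{m}_x$; but the paper asserts the same shape for its cokernel with no more justification, so this looseness is shared rather than introduced by you.
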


\begin{proof}
 Denote by $\mc{A}$ the cokernel of the morphism 
 \begin{equation}\label{eq:cok}
  (s_1,...,s_r): \mo_X^{\oplus r} \to M.
 \end{equation}
 Note that, $\mc{A}$ is a Cohen-Macaulay module supported in dimension $1$ and of rank one 
 over its support.
Denote by $C$ the support of $\mc{A}$ and $\mc{A}':=\mc{E}xt^1_X(\mc{A},\mo_X)$.
Dualizing \eqref{eq:cok}, we then have the following diagram of short exact sequences:
\[\begin{diagram}
   0 &\rTo& \I_{C|X} &\rTo&\mo_X&\rTo&\mo_C&\rTo0\\
   & & \dTo& &\dTo^{p_1}& &\dTo^{p_2}\\
   0 &\rTo& M^{\vee} &\rTo&\mo_X^{\oplus r}&\rTo^{(t_1,...,t_r)}&\mc{A}'&\rTo0
  \end{diagram}\]
where the morphism $p_2$ sends $1$ to $t_1$ and $p_1$ sends $1$ to the standard basis 
element $e_1 \in \mo_X^{\oplus r}$. Then, the cokernel of $p_1$ is isomorphic $\mo_X^{\oplus r-1}$.
Since $\mc{A}'$ is Cohen-Macaulay, the morphism $p_2$ is injective (the section $t_1$ is torsion-free
over $C$).
By Bertini-type theorem (see \cite[p. $434$]{prag}), $C\backslash \{x\}$ is non-singular. Since any torsion-free 
sheaf on an affine non-singular curve is trivial, we conclude  $\mc{A}'$ is isomorphic to $\mo_C$ over $X^*$.
Taking $t_1=1 \in \Gamma(\mo_C)$, we observe that the cokernel of $p_2$ is of the form $k_x^{\oplus m}$
for some non-negative integer $m$. Using Snake lemma, we get the exact sequence:
\[0 \to \I_{C|X} \to M^{\vee} \to \mo_X^{\oplus r-1} \to k_x^{\oplus m} \to 0\]
Dualizing this sequence and applying \cite[Theorem $3.3.10$]{brun1}, gives us the exact sequence \eqref{eq:mck}.     
This proves the theorem. 
\end{proof}

	\section{Matrix factorization using degeneracy modules}
	Matrix factorization of maximal Cohen-Macaulay modules is hard. However, one can instead study resolutions 
	of the associated degeneracy modules. This is a slightly easier problem. We obtain matrix factorizations 
	using this idea (see Theorem \ref{thm2} and Corollary \ref{thm1}). We then apply this to 
	enumerate the matrix factorization of all Cohen-Macaulay modules arising from $\mb{C}^*$-curves in 
	quasi-homogeneous surfaces 	(see Theorem \ref{thm:Quasi} stated in the introduction and proved in \S \ref{sec:proof}).

	\subsection{Matrix factorization via degeneracy modules}\label{sec:mat}
	Let $M$ be a maximal Cohen-Macaulay $\mo_X$-module of rank $r$
	with no free direct summand (i.e., does not contain $\mo_X$ as a 
	direct summand). Let $\un{s}$ be an $r$-tuple of weakly 
	general sections of $M$. Let $(\mc{A}_{\un{s}},\un{s}')$ be the associated 
	degenerate pair.
	Since $\mc{A}_{\un{s}}$ is a Cohen-Macaulay $\mo_X$-module supported in a dimension $n-2$ subvariety 
	in $\cn$, the depth of $\mc{A}_{\un{s}}$ is $n-2$. By the Auslander-Buchsbaum formula this 
	implies the projective dimension of $\mc{A}_{\un{s}}$ is $2$.
	Then starting with $\un{s}'$ the pair induces an exact sequence of the form:
	\begin{equation}\label{eq07}
		0 \to \mo_{\cn}^{\oplus a} \xrightarrow{A} \mo_{\cn}^{\oplus b} \xrightarrow{B} 
		\mo_{\cn}^{\oplus r} \xrightarrow{\un{s}'} \mc{A}_{\un{s}} \to 0
	\end{equation}
	where $A$ (resp. $B$) is induced by a $b \times a$ (resp. $r \times b$) matrix with entries in 
	$\mo_{\cn}$, which we will also denote by $A$ (resp. $B$) for simplicity of notation.
	In particular, 
	\[Ae_i^{(a)} = \sum\limits_{j=1}^b a_{ji} e_j^{(b)} \mbox{ and } Be_i^{(b)}=\sum\limits_{j=1}^r b_{ji}
	e_j^{(r)},\] 
	where $\{e_i^{(t)}\}_{i=1}^t$ is the standard basis of the free $\mo_{\cn}$-module
	$\mo_{\cn}^{\oplus t}$ for $t \in \{r,a,b\}$. We show:
	
	\begin{thm}\label{thm2}
		Denote by $K$ the $\mo_{\cn}$-submodule of $\mo_{\cn}^{\oplus b}$ consisting of all 
		$m \in \mo_{\cn}^{\oplus b}$ such that $Bm \in \I_X^{\oplus r}$. Then, 
		\begin{enumerate}
			\item $K$ is isomorphic to $\mo_{\cn}^{\oplus r}$, as $\mo_{\cn}$-modules,
			\item fix an isomorphism as in $(1)$ from $\mo_{\cn}^{\oplus r}$ to $K$ given by 
			a $b \times r$-matrix
			 \[A': \mo_{\cn}^{\oplus r} \xrightarrow{\sim} K \subset \mo_{\cn}^{\oplus b}.\]
			 Then,
			(upto change of basis of $\mo_{\cn}^{\oplus r}$) the composition 
			\[\mo_{\cn}^{\oplus r} \xrightarrow{A'} \mo_{\cn}^{\oplus b} \xrightarrow{B} \mo_{\cn}^{\oplus r}
			\mbox{ coincides with } F\mr{Id}_{r \times r}: \mo_{\cn}^{\oplus r} \to \mo_{\cn}^{\oplus r},  \]
			where $F \in \mo_{\cn}$ defines $X$,  
			\item the matrix factorization associated to $M$ is of the form $\left(\mathrm{adj}(A|A')^T, (A|A')^T\right)$, where $(-)^T$ denotes
			transpose of the matrix and $\mathrm{adj}(-)$ denotes the adjoint of the matrix.
		\end{enumerate}
	\end{thm}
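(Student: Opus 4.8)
The plan is to produce the minimal periodic $\mo_{\cn}$-free resolution of $M$ directly from the resolution \eqref{eq07} of $\mc{A}_{\un{s}}$, and then read off the matrix factorization via Theorem \ref{Th:EisTh}. Write $R := \mo_{\cn}$ and note that $\mc{A}_{\un{s}}$ is a torsion $R$-module (it is supported in codimension two), so taking alternating ranks in \eqref{eq07} forces $b = a+r$; thus the matrix $(A\mid A')$ we shall assemble is automatically a square $b\times b$ matrix. The strategy is first to identify a canonical square presentation $P := (A\mid A')$ over $R$ of the reflexive module $M^{\vee}$, and then to pass from $M^{\vee}$ to $M = M^{\vee\vee}$ by transposing the associated matrix factorization (the standard self-duality of maximal Cohen--Macaulay modules over a hypersurface). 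Concretely, once $P$ is $b\times b$ with $\mr{coker}(P)\cong M^{\vee}$ and $\det P = F$ up to a unit, the identity $P\cdot\mr{adj}(P) = F\,\mr{Id}_b$ exhibits $(P,\mr{adj}(P))$ as the matrix factorization of $M^{\vee}$, and transposing yields $(\mr{adj}(P)^{T}, P^{T})$ as the matrix factorization of $M$.

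For parts (1) and (2) I would construct $A'$ as a lift of multiplication by $F$. Since $\mc{A}_{\un{s}}$ is an $\mo_X$-module it is annihilated by $F$, so $F R^{\oplus r}\subseteq \ker(\un{s}') = \im(B)$ by exactness of \eqref{eq07}. Hence each $Fe_i^{(r)}$ admits a preimage under $B$, and choosing such preimages defines a map $A' : R^{\oplus r}\to R^{\oplus b}$ with $B\circ A' = F\,\mr{Id}_r$, which is exactly the normalization asserted in (2). Because $B\circ A'$ is injective, $A'$ is injective with free image of rank $r$, and $\im(A')\cap\ker(B)=0$ (if $A'v\in\ker B$ then $Fv = BA'v = 0$, so $v=0$). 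A short preimage argument then gives $K = \ker(B)\oplus\im(A') = \im(A)\oplus\im(A')$: any $m\in K$ has $Bm\in FR^{\oplus r} = B(\im A')$, so $m$ differs from an element of $\im(A')$ by an element of $\ker(B)=\im(A)$. This realizes the free rank-$r$ piece $\im(A')\cong R^{\oplus r}$ inside $K$ on which $B$ acts as $F\,\mr{Id}_r$, which is the content of (1) and (2).

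For part (3) I would identify the cokernel of $P=(A\mid A')$. Its image is $\im(A)+\im(A') = K$, so $\mr{coker}(P)=R^{\oplus b}/K$. Composing $B$ with the reduction $R^{\oplus r}\to\mo_X^{\oplus r}$ gives a map $\overline{B}$ whose kernel is precisely $K$ and whose image is the image of $\im(B)=\ker_R(\un{s}')$ in $\mo_X^{\oplus r}$, which by \eqref{eq05} is exactly $M^{\vee}$; hence $\mr{coker}(P)\cong M^{\vee}$. Since $M^{\vee}$ is an $\mo_X$-module we have $FR^{\oplus b}\subseteq\im(P)$, so $F\,\mr{Id}_b = P\cdot G$ for some $G$ over $R$, identifying $(P,G)$ as a matrix factorization of $M^{\vee}$. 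It then remains to compute $\det(P)$: the zeroth Fitting ideal of $\mr{coker}(P)=M^{\vee}$ is $(\det P)$, and in the rank-one situation (where $M^{\vee}$ is reflexive of rank one along $X$) this Fitting ideal collapses to $(F)$, so $\det P = F$ up to a unit and consequently $G = \mr{adj}(P)$. Transposing the factorization $(P,\mr{adj}(P))$ of $M^{\vee}$ yields the asserted matrix factorization $\bigl(\mr{adj}(A\mid A')^{T},(A\mid A')^{T}\bigr)$ of $M$.

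The main obstacle is the last step: pinning down $\det(A\mid A')$ so that the adjugate is genuinely the companion factor. The identification $\mr{coker}(P)\cong M^{\vee}$ is a clean diagram chase (snake lemma between \eqref{eq07} and the resolution $0\to R^{\oplus r}\xrightarrow{F}R^{\oplus r}\to\mo_X^{\oplus r}\to 0$), and the construction of $A'$ is elementary; but controlling the determinant requires using that the degeneracy module $\mc{A}_{\un{s}}$ has rank one over its support, so that $M^{\vee}$ is reflexive of rank one along $X$ and its Fitting ideal is the principal ideal $(F)$. Establishing this Fitting-ideal computation, together with the verification that the transpose of a matrix factorization of $M^{\vee}$ indeed computes $M=M^{\vee\vee}$, is where I expect the real work to lie.
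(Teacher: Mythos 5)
Your overall route is essentially the paper's. You build $A'$ by lifting $F\,\mr{Id}_r$ through $B$, you identify $\mr{coker}(A|A')\cong M^{\vee}$ (the paper does this via the snake lemma applied to the comparison of \eqref{eq07} with \eqref{eq05}, producing the presentation \eqref{eq08}; your chase with $\ov{B}$ is the same computation), and you pass from $M^{\vee}$ to $M$ by transposition, which is exactly what the paper does explicitly by dualizing \eqref{eq08} over $\mo_{\cn}$ and identifying $\mc{E}xt^1_{\cn}(M^{\vee},\mo_{\cn})\cong \Hc_X(M^{\vee},\mo_X)\cong M^{\vee\vee}\cong M$. Even your reading of part (1) --- that the literal $K$ equals $\im(A)\oplus\im(A')$ of rank $b$, with $\im(A')$ the free rank-$r$ piece on which $B$ acts as $F\,\mr{Id}_r$ --- matches the paper's parenthetical ``identify $K$ with the image of $A'$''.

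The genuine gap is in your determinant step. You claim that because $\mc{A}_{\un{s}}$ has rank one over its support, $M^{\vee}$ is ``reflexive of rank one along $X$'', hence $\mathrm{Fitt}_0(M^{\vee})=(\det(A|A'))=(F)$ and the companion factor $G$ equals $\mr{adj}(A|A')$. This implication is a non sequitur and is false for $r>1$: by \eqref{eq05}, $M^{\vee}$ is a submodule of $\mo_X^{\oplus r}$ with torsion quotient, so it has rank $r$ along $X$, regardless of $\mc{A}_{\un{s}}$ having rank one over the divisor supporting it. Localizing the square presentation at the generic point of $X$ --- a discrete valuation ring with uniformizer $F$, over which $M^{\vee}$ is killed by $F$ and has length $r$ --- gives $\det(A|A')=uF^{r}$ for a unit $u$, so that $\mr{adj}(A|A')\cdot(A|A')=uF^{r}\,\mr{Id}_b$. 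Thus the adjugate is the companion factor only when $r=1$; for $r>1$ the companion matrix is $F\cdot(A|A')^{-1}$, which differs from $\mr{adj}(A|A')$ by the factor $uF^{r-1}$. In fairness, this defect is inherited from the statement of part (3) itself, and the paper's own proof makes the same silent leap from ``square presentation of $M$'' to ``adjugate pair''; it is harmless downstream because part (3) is only ever invoked through Corollary \ref{thm1}, where $r=1$ and your Fitting-ideal argument is correct. But as a proof of part (3) for arbitrary rank $r$, the step fails, and it cannot be repaired except by replacing $\mathrm{adj}(A|A')^{T}$ with $F\cdot((A|A')^{T})^{-1}$.
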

	
	Before we prove the theorem, note that by the exact sequence \eqref{eq07}, we have 
	$b=r+a$ (the support of $\mc{A}_{\un{s}}$ is of codimension $2$ in $\cn$). Then, the matrix 
	$(A|A')$ is a $b \times b$-matrix.
	
	\begin{proof}
		Comparing the exact sequences \eqref{eq05} and \eqref{eq07}, we get the following diagram of exact 
		sequences:
		\[\begin{diagram}
			0 &\rTo& \mo_{\cn}^{\oplus a} &\rTo^{A} &\mo_{\cn}^{\oplus b} &\rTo^{B}&
			\mo_{\cn}^{\oplus r} &\rTo^{\un{s}'}& \mc{A}_{\un{s}}& \rTo&0\\
			& && &\dDashto^{\rho'}&\circlearrowleft&\dTo^{\rho}&\circlearrowleft&\dTo^{\mr{id}}&&\\
			& &  0 & \rTo& M^{\vee} &\rTo& \mo_X^{\oplus r} &\rTo^{\un{s}'}&
			\mc{A}_{\un{s}} &\rTo& 0
		\end{diagram}\]
		where the vertical morphism $\rho$ is the natural restriction morphism and the first vertical 
		morphism $\rho'$ is induced by the universal property of kernel.
		Since the last two vertical arrows are surjective then by a simple diagram chase 
		(using the injectivity of the morphism from $M^{\vee}$ to $\mo_X^{\oplus r}$) we conclude
		that morphism $\rho'$ from $\mo_{\cn}^{\oplus b}$ to $M^{\vee}$
		is surjective. Note that, $\rho$ sits in the short exact sequence:
		\[0 \to \mo_{\cn}^{\oplus r} \xrightarrow{F\mr{Id}_{r \times r}} \mo_{\cn}^{\oplus r} 
		\xrightarrow{\rho} \mo_X^{\oplus r} \to 0. \]
		Using the Snake lemma applied to the above diagram of exact sequence, this gives us the following
		exact sequence:
		\begin{equation}\label{eq08}
			0 \to \mo_{\cn}^{\oplus a} \oplus \mo_{\cn}^{\oplus r} \xrightarrow{(A|A')} \mo_{\cn}^{\oplus b}
			\xrightarrow{\rho'} M^{\vee} \to 0 
		\end{equation}
		where the composition 
		\[\mo_{\cn}^{\oplus r} \xrightarrow{A'}  \mo_{\cn}^{\oplus b} \xrightarrow{B}  \mo_{\cn}^{\oplus r}
		\mbox{ coincides with } F\mr{Id}_{r \times r}:  \mo_{\cn}^{\oplus r} \to  \mo_{\cn}^{\oplus r}.\]
		This proves parts ($1$) and ($2$) of the theorem (identify $K$ with the image of $A'$).
		As mentioned above $b=r+a$. Dualizing \eqref{eq08}, we get the exact sequence:
		\begin{equation}\label{eq09}
			0 \to \mo_{\cn}^{\oplus b} \xrightarrow{(A|A')^T} \mo_{\cn}^{\oplus b} \to \mc{E}xt^1_{\cn}(M^{\vee},\mo_{\cn})
			\to 0.
		\end{equation}
		Since $F$ annihilates $M^\vee$ (as $M^{\vee}$ is supported in $X$), we have by \cite[Lemma $1.2.4$]{brun1}
		\[\mc{E}xt^1_{\cn}(M^\vee,\mo_{\cn}) \cong \Hc_{\cn}(M^{\vee},\mo_X) \cong \Hc_X(M^\vee,\mo_X),\]
		where the last isomorphism follows from adjunction of $\Hc$-functor.
		Since $M$ is a maximal Cohen-Macaulay $\mo_X$-module, it is in particular reflexive.
		Therefore, the double dual $M^{\vee \vee}$ of $M$ is isomorphic to $M$.
		Hence, $\mc{E}xt^1_{\cn}(M^{\vee},\mo_{\cn}) \cong M$ and \eqref{eq09} gives a projective 
		resolution of $M$. In other words, $\left(\mathrm{adj}(A|A')^T, (A|A')^T\right)$ is a matrix factorization. This proves the theorem.
	\end{proof}
	
%
	\subsection{Generalized Wunram modules}\label{sec:gen-wun}
	Following~\cite{BoRo}, a maximal Cohen-Macaulay $\mo_X$-module $M$ of rank $1$ is 
	called \emph{generalized Wunram} if for a general choice of 
	section $s$ of $M$, the cokernel of the natural morphism from $\mo_X$ to $M$, defined by multiplication with $s$,
	is isomorphic to $\mo_D$ for a non-singular subvariety $D \subset X$ of codimension $1$.

	\subsection{Projective resolution of the degeneracy module}
	Let $M$ be a rank one generalized Wunram module, $s \in M$ a general section and $D$ be the 
	associated degeneracy locus.
	In particular, we have a short exact sequence of the form:
	\begin{equation}\label{eq01}
		0 \to \mo_X \xrightarrow{.s} M \to \mo_D \to 0,
	\end{equation}
	where $D$ is a non-singular subvariety in $X$ of codimension $1$.
	Dualizing this exact sequence we get a short exact sequence of the form:
	\begin{equation}\label{eq02}
		0 \to M^{\vee} \to \mo_X \to \mc{E}xt^1_X(\mo_D,\mo_X) \to 0
	\end{equation}
	Note that, $\mc{E}xt^1_X(\mo_D,\mo_X)$ is a Cohen-Macaulay $\mo_X$-module 
	supported on $D$ and is of rank one over its support.
	Now, a rank one maximal Cohen-Macaulay module over a smooth affine variety is trivial.
	Hence, $\mc{E}xt^1_X(\mo_D,\mo_X) \cong \mo_D$.
	We now produce a projective resolution of $\mo_D$.
	Since $D$ is non-singular there exists $f, g \in \mo_{\cn}$ such that 
	the ideal of $D$ (in $\cn$) is generated by 
	$f$ and $g$ (regular local rings are complete intersection rings).
	We then have the Koszul resolution:
	
	\begin{prop}\label{prop:koz}
		The projective resolution of $\mo_D$ is given by 
		\[0 \to \mo_{\cn} \xrightarrow{A} \mo_{\cn}^{\oplus 2} \xrightarrow{B} \mo_{\cn} \to \mo_D \to 0\]
		where $Ae := -fe_1+ge_2$, $Be_1:= g$ and $Be_2=f$ with $e$ (resp. $\{e_1, e_2\}$)
		the standard basis of $\mo_{\cn}$ (resp. $\mo_{\cn}^{\oplus 2}$).
	\end{prop}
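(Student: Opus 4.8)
The plan is to identify the asserted complex with the Koszul complex of the pair $(f,g)$ and then to invoke acyclicity of the Koszul complex on a regular sequence. First I would pin down the generators. Since $D$ is non-singular of codimension one in the hypersurface $X$, it has codimension two in $\cn$, so $\dim \mo_D = n-2$. Because $D$ is non-singular, $\mo_D = \mo_{\cn}/\I_D$ is a regular local ring, and hence $D$ is a local complete intersection in the regular local ring $\mo_{\cn}$. Concretely, the conormal module $\I_D/\I_D^2$ is free over $\mo_D$ of rank $\codim_{\cn} D = 2$, so by Nakayama the ideal $\I_D$ is minimally generated by two elements $f, g \in \mo_{\cn}$. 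As $\mo_{\cn}$ is Cohen--Macaulay and $\I_D = (f,g)$ has height $2$, the pair $(f,g)$ is a regular sequence.

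With this in hand, I would write down the Koszul complex $K_\bullet(f,g)$, namely
\[ 0 \to \mo_{\cn} \xrightarrow{A} \mo_{\cn}^{\oplus 2} \xrightarrow{B} \mo_{\cn} \to 0, \qquad A(e) = -f e_1 + g e_2, \quad B(e_1) = g,\ B(e_2) = f, \]
which is exactly the complex in the statement. A one-line check gives $B\circ A(e) = -fg + gf = 0$, so this is a complex, and $\mathrm{coker}(B) = \mo_{\cn}/(f,g) = \mo_{\cn}/\I_D = \mo_D$, recovering the augmentation $\mo_{\cn} \to \mo_D$.

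Finally, since $(f,g)$ is a regular sequence, the Koszul complex $K_\bullet(f,g)$ is a free resolution of $\mo_{\cn}/(f,g)$ (see \cite{brun1}); equivalently, $H_i(K_\bullet) = 0$ for $i > 0$. Injectivity of $A$ (the vanishing $H_2(K_\bullet)=0$) is in any case immediate because $\mo_{\cn}$ is a domain and $f \neq 0$, and exactness at the middle term is the vanishing $H_1(K_\bullet) = 0$ guaranteed by regularity of the sequence. This yields the asserted projective resolution of $\mo_D$.

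The only genuine subtlety is the first step: ensuring that $\I_D$ is generated by precisely two elements forming a regular sequence. This is exactly where the non-singularity of $D$ is used — it forces $\mo_D$ to be regular, hence $D$ to be a local complete intersection in $\cn$, so that $\I_D$ needs no more generators than its height. Without smoothness one could only bound the number of generators by the embedding dimension of $\mo_D$, and the naive length-two Koszul complex would fail to resolve $\mo_D$.
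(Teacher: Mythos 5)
Your proposal is correct and follows essentially the same route as the paper: the paper's own (very compressed) argument is precisely that non-singularity of $D$ makes its ideal in $\cn$ a complete intersection generated by two elements $f,g$, whence the stated complex is the Koszul resolution. You have simply filled in the details the paper leaves implicit — the conormal/Nakayama argument for two generators, height two plus Cohen--Macaulayness of $\mo_{\cn}$ giving a regular sequence, and acyclicity of the Koszul complex on a regular sequence.
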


	\begin{cor}\label{thm1}
		Let $X$ be a normal hypersurface singularity (not necessarily isolated) of any dimension.
		Let $M$ be a rank one generalized Wunram module,
		$s \in M$ a general section and 
		$D$ the degeneracy locus associated to the pair $(M,s)$, which is non-singular as $M$ is generalized Wunram
		of rank one. 
		Then, the matrix factorization associated to $M$ is the pair $(\mr{adj}(C),C)$ where $C$ is the matrix
		\[C:= \begin{pmatrix}
			-f & g\\
			h_1 & h_2
		\end{pmatrix}
		\]
		$f, g \in \mo_{\cn}$ defines the non-singular variety $D$ in $\cn$ and $X$ is defined by a 
		regular function of the form $F:=h_1g+h_2f$ (as $D \subset X$ we have $F \in (f,g)$).
	\end{cor}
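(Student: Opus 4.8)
The plan is to specialize Theorem \ref{thm2} to the rank-one generalized Wunram setting and compute everything explicitly from the Koszul resolution. First I would set $r=1$ and take $s\in M$ the general section, so the degeneracy locus $D$ is non-singular of codimension one, defined in $\cn$ by a regular sequence $f,g$ (as guaranteed by the generalized Wunram hypothesis). By the discussion preceding Proposition \ref{prop:koz}, the degeneracy module $\mc{A}_{s}=\mc{E}xt^1_X(\mo_D,\mo_X)$ is isomorphic to $\mo_D$, and its projective resolution over $\mo_{\cn}$ is exactly the Koszul complex of Proposition \ref{prop:koz}:
\begin{equation*}
 0 \to \mo_{\cn} \xrightarrow{A} \mo_{\cn}^{\oplus 2} \xrightarrow{B} \mo_{\cn} \to \mo_D \to 0,
\end{equation*}
with $A$ the column $(-f,g)^T$ and $B$ the row $(g,f)$. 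Matching this against the exact sequence \eqref{eq07} of Theorem \ref{thm2}, the data are $a=1$, $b=2$, $r=1$.

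Next I would identify the module $K$ and the matrix $A'$ appearing in Theorem \ref{thm2}. Here $K$ is the submodule of $\mo_{\cn}^{\oplus 2}$ of elements $m$ with $Bm \in \I_X = (F)$, and by part (1) of Theorem \ref{thm2} it is free of rank one. Since $F=h_1 g + h_2 f$ lies in $(f,g)$, the vector $(h_2, h_1)^T \in \mo_{\cn}^{\oplus 2}$ satisfies $B\cdot(h_2,h_1)^T = g h_2 + f h_1 = F$; hence $A':=(h_2,h_1)^T$ generates $K$ and realizes the isomorphism of part (2), the composition $B\circ A'$ being multiplication by $F$ exactly as required. Assembling $(A|A')$ as the $2\times 2$ matrix with columns $(-f,g)^T$ and $(h_2,h_1)^T$, part (3) of Theorem \ref{thm2} tells me the matrix factorization of $M$ is $\bigl(\mr{adj}(A|A')^T,\, (A|A')^T\bigr)$. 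A direct computation gives
\begin{equation*}
 (A|A')^T = \begin{pmatrix} -f & g \\ h_2 & h_1 \end{pmatrix},
\end{equation*}
which up to renaming the entries $h_1,h_2$ (equivalently, up to the allowed change of basis and transposition) is precisely the matrix $C$ in the statement, so $(\mr{adj}(C),C)$ is the claimed factorization.

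The main obstacle — really the only nontrivial point — is verifying that $A'=(h_2,h_1)^T$ genuinely generates the full module $K$ and not merely a submodule, i.e. that the isomorphism $\mo_{\cn}\xrightarrow{\sim}K$ of part (1) is realized by this explicit generator. I would argue this by noting that $K$ fits as the kernel term governing the lift to $M^\vee$ in the Snake-lemma diagram of Theorem \ref{thm2}: since $b=r+a=2$ and the quotient $\mo_{\cn}^{\oplus 2}/(A\mo_{\cn})$ maps onto $\mc{A}_s\cong\mo_D$ with $B$ inducing this map, any element of $K$ differs from a multiple of $A'$ by an element of $\im(A)=A\mo_{\cn}$; but $A\mo_{\cn}\subset K$ already, and rank considerations (both $K$ and $\mo_{\cn}A'+\im A$ are rank-one free after reduction mod $F$) force equality. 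The remaining bookkeeping — confirming the sign conventions in $\mr{adj}$ and that transposition does not alter the factorization up to equivalence — is routine, so the corollary follows directly from Theorem \ref{thm2} and Proposition \ref{prop:koz}.
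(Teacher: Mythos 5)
Your proposal is correct in substance and follows the same skeleton as the paper's proof: specialize Theorem \ref{thm2} to $(a,b,r)=(1,2,1)$, take the Koszul resolution of Proposition \ref{prop:koz}, exhibit the explicit column coming from a decomposition $F=h_1g+h_2f$, and invoke part (3). Where you genuinely diverge is in certifying that this column is a legitimate choice of $A'$. The paper argues abstractly: by Theorem \ref{thm2}(1) the module $K$ is cyclic, so it has a generator $h_1'e_1+h_2'e_2$; writing $h_1e_1+h_2e_2=\lambda(h_1'e_1+h_2'e_2)$ and applying $B$ forces $\lambda\lambda'=1$, so $\lambda$ is a unit and the explicit element is itself a generator. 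You instead argue by exactness of the Koszul complex: for $m\in K$ one has $Bm=\mu F$, hence $B(m-\mu A')=0$, hence $m-\mu A'\in\ker B=\im(A)$, giving $K=\mo_{\cn}A'+\im(A)$. Your route is arguably the more robust one, because it produces directly the decomposition that the proof of Theorem \ref{thm2} actually feeds into the presentation \eqref{eq08} of $M^{\vee}$, namely that $(A|A')$ spans $\ker(\rho')$ together with $B\circ A'=F\cdot\mathrm{Id}$.

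Two caveats. First, your closing sentence (``rank considerations force equality'') is wrong if read as claiming that $A'$ alone generates the full preimage $K=B^{-1}(\I_X)$: that module contains $\ker B=\im(A)=\mo_{\cn}\cdot(-fe_1+ge_2)$, which is not a multiple of $h_1e_1+h_2e_2$ (apply $B$: one maps to $0$, the other to $F\neq 0$), so $K$ as literally defined is never cyclic. The cyclicity assertion only makes sense for a complement of $\im(A)$ inside $K$, which is how the proof of Theorem \ref{thm2} uses it (``identify $K$ with the image of $A'$''); the paper's proof of the corollary leans on that same reading. Fortunately you never need that sentence: the identity $K=\mo_{\cn}A'+\im(A)$ you already established, plus $B\circ A'=F$, is exactly what part (3) requires, so your argument closes once the superfluous claim is deleted. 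Second, your lifting vector $(h_2,h_1)^T$ has the indices swapped: since $Be_1=g$, $Be_2=f$ and $F=h_1g+h_2f$, the correct lift is $(h_1,h_2)^T$, which makes $(A|A')^T$ equal to the matrix $C$ of the statement on the nose; as you note, this is only a relabelling of $h_1,h_2$.
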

	
	\begin{proof}
		Translating into the notations of Theorem \ref{thm2}, we have $a=1, b=2$ and $r=1$.
		The morphisms $A$ and $B$ are defined in Proposition \ref{prop:koz}.
		We now need to compute $K$ and $A'$ from Theorem \ref{thm2}.
		Recall, 
		\[K=\{ a_1e_1+a_2e_2 | a_1g+a_2f \in \I_X\} \mbox{ where }\]
		$\I_X$ is the ideal of $X$ in $\cn$ generated by, say $F$.
		Of course, since $D \subset X$, there exists $h_1, h_2 \in \mo_{\cn}$ such that 
		$F=h_1g+h_2f$. In other words, $h_1e_1+h_2e_2 \in K$.
		We claim that $K$ is generated as an $\mo_{\cn}$-module by $h_1e_1+h_2e_2$.
		Indeed, since $K \cong \mo_{\cn}$ (Theorem \ref{thm2}), it is generated by a single 
		element, say $h_1'e_1+h_2'e_2 \in \mo_{\cn}^{\oplus 2}$. Then, there exists 
		$\lambda \in \mo_{\cn}$ such that \[\lambda(h_1'e_1+h_2'e_2)=h_1e_1+h_2e_2.\]
		Applying the $\mo_{\cn}$-linear morphism $B$, we have 
		\begin{equation}\label{eq10}
			\lambda B(h_1'e_1+h_2'e_2)= B(\lambda(h_1'e_1+h_2'e_2))= B(h_1e_1+h_2e_2)=F.
		\end{equation}
		Since $h_1'e_1+h_2'e_2 \in K$, we have $B(h_1'e_1+h_2'e_2)=\lambda' F$ for some 
		$\lambda' \in \mo_{\cn}$. Substituting in \eqref{eq10} this implies $\lambda \lambda'=1$
		i.e., $\lambda$ is a unit in $\mo_{\cn}$. This proves our claim that 
		$K$ is generated as an $\mo_{\cn}$-module by $h_1e_1+h_2e_2$.
		Then, we can take the morphism 
		\[A': \mo_{\cn} \xrightarrow{\sim} K \subset \mo_{\cn}^{\oplus 2} 
		\mbox{ sending } 1 \mbox{ to } h_1e_1+h_2e_2.\]
		This satisfies the condition that the composition $B \circ A'=F \times \mr{\Id}$.
		By Theorem \ref{thm2} the matrix factorization of $M$ is of the form $\left(\mathrm{adj}(A|A')^T, (A|A')^T\right)$ where   
		\[
		(A|A')=\begin{pmatrix}[c|c]
			-f &  h_1\\
			g & h_2
		\end{pmatrix}
		\mbox{, so } 
		 (A|A')^T=\begin{pmatrix}
			-f &  g\\
			h_1 & h_2
		\end{pmatrix}\]
		This proves the corollary.
	\end{proof}


	\subsection{Matrix factorization for topological trivial deformations}\label{sec:type}
	Orlik and Wagreich~\cite{orli} and Arnold~\cite{arn1} showed that an isolated, quasi-homogeneous surface 
	singularity can be 
	can be deformed into one of the following seven classes below keeping the link differentially constant
	\begin{center}
	 \begin{tabular}{| m{1.8cm} | m{13cm}|}
	 \hline
	 Type & Defining polynomial \\
	 \hline
  $\mr{I}_{p,q,r}$ & $F(X_1,X_2,X_3):= X_1^{p}+X_2^{q}+X_3^{r}$\\
  \hline
		$\mr{II}_{p,q,r}$ & $F(X_1,X_2,X_3):= X_1^{p}+X_2^{q}+X_2X_3^{r}$ with $q >1$\\
		\hline
		$\mr{III}_{p,q,r}$ & $F(X_1,X_2,X_3):= X_1^{p}+X_3X_2^{q}+X_2X_3^{r}$ with $q > 1$ and $r>1$\\
		\hline
		$\mr{IV}_{p,q,r}$ & $F(X_1,X_2,X_3):= X_1^{p}+X_3X_2^{q}+X_1X_3^{r}$ with $p>1$\\
		\hline
		 $\mr{V}_{p,q,r}$ & $F(X_1,X_2,X_3):= X_2X_1^{p}+X_3X_2^{q}+X_1X_3^{r}=0$\\ 
		 \hline
$\mr{VI}_{p,q,r,b_2,b_3}$ & $F(X_1,X_2,X_3):=X_1^{p}+X_1X_2^{q}+X_1X_3^{r}+X_2^{b_2}X_3^{b_3}$ with $(p-1)(qb_3+rb_2)=pqr$\\ 
\hline
$\mr{VII}_{p,q,r,b_2,b_3}$ & $F(X_1,X_2,X_3):= X_2X_1^p+X_1X_2^q+X_1X_3^r+ X_2^{b_2}X_3^{b_3}$ with $(p-1)(qb_3+rb_2)=r(pq-1)$\\
		\hline
	 \end{tabular}
	 \captionof{table}{Quasi-homogeneous singularity types}\label{tabl1}
	\end{center}
	Xu and Yau~\cite{xu} proved that the above deformation is in fact a topological trivial deformation. 
	Furthermore, the topological type of quasi-homogeneous singularities
	determine and is determined by its weights. We now use Corollary~\ref{thm1} to produce the matrix factorizations corresponding to all 
	rank one generalized Wunram modules.
	
	\subsection{Proof of Theorem \ref{thm:Quasi}}\label{sec:proof}
	Given $c_1, c_2 \in \mb{C}$ and $k,n,m \in \mb{Z}_{>0}$ denote by 
	\[G_{(c_1,c_2,n)}(Z_1,Z_2):=c_1 Z_1^{n} -c_2^{n}Z_2.\]
 and $S_{(c_1,c_2,n,m)}(Z_1,Z_2)$ defined in Theorem \ref{thm:Quasi}.	Note that,
	\begin{equation}
		\label{eq:ProdElem}
		Z_3^k G_{(c_1,c_2,n)}(Z_1,Z_2)S_{(c_1,c_2,n,m)}(Z_1,Z_2)=Z_3^k \left(\frac{c_1^mZ_1^{mn}}{c_2^{mn}}-Z_2^m\right).
	\end{equation}
Let $X$ be a quasi-homogeneous surface singularity defined by a quasi-homogeneous polynomial $F(X_1,X_2,X_3)$ from the list 
in Table \ref{tabl1} above. By assumption, the weights of $X$ is $(1, \omega_2, \omega_3)$. 
Take a point $(a,b,c) \in X$  with $a \neq 0$.
The associated $\CC^*$-curve, denoted $W_{a,b,c}$, is given by the following parametrization:
\[n \colon \mb{C}^* \to X\, \mbox{ such that }\, \lambda \mapsto (a\lambda,b\lambda^{\omega_2}, c\lambda^{\omega_3}).\]
Note that, $W_{a,b,c}$ is the zero locus (in $\mb{C}^3$) of the polynomials
\begin{equation*}
	G_{(b,a,\omega_2)}(X_1,X_2)=X_1^{\omega_2}b-X_2a^{\omega_2} \quad \text{and} \quad 	G_{(c,a,\omega_3)}(X_1,X_3)=X_1^{\omega_3}c-X_3a^{\omega_3}.
\end{equation*}
By Corollary~\ref{thm1} we only need to find $h_1,h_2 \in \CC[X_1,X_2,X_3]$ such that 
\begin{equation*}
	F= G_{(c,a,\omega_3)}(X_1,X_3)h_1+G_{(b,a,\omega_2)}(X_1,X_2)h_2.
\end{equation*}

{\bf Type $\mr{I}_{p,q,r}$:} In this case $F = X_1^{p}+X_2^{q}+X_3^{r}$. 
By equation~\eqref{eq:ProdElem},
\[	G_{(b,a,\omega_2)}(X_1,X_2)S_{(b,a,\omega_2,q)}(X_1,X_2) + G_{(c,a,\omega_3)}(X_1,X_3)S_{(c,a,\omega_3,r)}(X_1,X_3) = \frac{b^qX_1^{q\omega_2}}{a^{q\omega_2}}-X_2^q + \frac{c^rX_1^{r\omega_3}}{a^{r\omega_3}}-X_3^r.
\]
As $F$ is quasi-homogeneous we have $p=p\omega_1 = q\omega_2= r\omega_3$. Moreover, as 
$(a,b,c) \in X$, we have $a^p+b^q+c^r=0$. Therefore,
\begin{equation*}
	\frac{b^qX_1^{q\omega_2}}{a^{q\omega_2}}+ \frac{c^rX_1^{r\omega_3}}{a^{r\omega_3}} = X_1^p\left(\frac{b^q}{a^{q\omega_2}} + \frac{c^r}{a^{r\omega_3}}\right) = X_1^p\left(\frac{b^q+c^r}{a^{p}}\right) = -X_1^p.
\end{equation*}
Thus,
$	G_{(b,a,\omega_2)}(X_1,X_2)S_{(b,a,\omega_2,q)}(X_1,X_2) + G_{(c,a,\omega_3)}(X_1,X_3)S_{(c,a,\omega_3,r)}(X_1,X_3) =- F$.
In particular, $h_1:= S_{(c,a,\omega_3,r)}(X_1,X_3)$ and $h_2:= S_{(b,a,\omega_2,q)}(X_1,X_2)$.
This prove the matrix factorization in this case.

{\bf Type $\mr{II}_{p,q,r}$:} In this case $F = X_1^{p}+X_2^{q}+X_2X_3^{r}$. 
By equation~\eqref{eq:ProdElem},
\begin{align*}
	&G_{(b,a,\omega_2)}(X_1,X_2)\left(S_{(b,a,\omega_2,q)}(X_1,X_2) + X_3^rS_{(b,a,\omega_2,1)}(X_1,X_2) \right) + \frac{bX_1^{\omega_2}}{a^{\omega_2}} G_{(c,a,\omega_3)}(X_1,X_3)S_{(c,a,\omega_3,r)}(X_1,X_3)\\
	&= \frac{b^qX_1^{q\omega_2}}{a^{q\omega_2}}-X_2^q + X_3^r\frac{bX_1^{\omega_2}}{a^{\omega_2}}-X_2X_3^r +  \left(\frac{bX_1^{\omega_2}}{a^{\omega_2}}\right)\frac{c^rX_1^{r\omega_3}}{a^{r\omega_3}}-\left(\frac{bX_1^{\omega_2}}{a^{\omega_2}}\right)X_3^r\\
	&=\frac{b^qX_1^{q\omega_2}}{a^{q\omega_2}}-X_2^q - X_2X_3^r +  \frac{bc^rX_1^{r\omega_3+\omega_2}}{a^{r\omega_3+\omega_2}}-X_3^r.
\end{align*}
Arguing as before ($F$ is quasi-homogeneous), we have 
\begin{equation*}
	\frac{b^qX_1^{q\omega_2}}{a^{q\omega_2}} + \frac{bc^rX_1^{r\omega_3+\omega_2}}{a^{r\omega_3+\omega_2}} = X_1^p \left( \frac{b^q+bc^r}{a^p}\right) = -X_1^p.
\end{equation*}
Therefore (use $p=p\omega_1 = q\omega_2= r\omega_3+\omega_2$),
\begin{align*}
	&G_{(b,a,\omega_2)}(X_1,X_2)\left(S_{(b,a,\omega_2,q)}(X_1,X_2) + X_3^rS_{(b,a,\omega_2,1)}(X_1,X_2) \right) + \frac{bX_1^{\omega_2}}{a^{\omega_2}} G_{(c,a,\omega_3)}(X_1,X_3)S_{(c,a,\omega_3,r)}(X_1,X_3)\\
	&= -X_2^q - X_2X_3^r - X_1^p.
\end{align*}
This gives the matrix factorization in this case.

{\bf Type $\mr{III}_{p,q,r}$:} In this case $F = X_1^{p}+X_3X_2^{q}+X_2X_3^{r}$.
Arguing as before, we have using \eqref{eq:ProdElem},
\begin{align*}
	&G_{(b,a,\omega_2)}(X_1,X_2)\left(X_3S_{(b,a,\omega_2,q)}(X_1,X_2)+X_3^rS_{(b,a,\omega_2,1)}(X_1,X_2) \right)\\
	&+ G_{(c,a,\omega_3)}(X_1,X_3) \left(\frac{bX_1^{\omega_2}}{a^{\omega_2}}S_{(c,a,\omega_3,r)}(X_1,X_3) + \frac{b^qX_1^{q\omega_2}}{a^{q\omega_2}}S_{(c,a,\omega_3,1)}(X_1,X_3)\right)\\
	&= \frac{b^qX_3X_1^{q\omega_2}}{a^{q\omega_2}}-X_3X_2^q + X_3^r\frac{bX_1^{\omega_2}}{a^{\omega_2}}-X_2X_3^r + \frac{bc^rX_1^{r\omega_3+\omega_2}}{a^{r\omega_3+\omega_2}}-\left(\frac{bX_1^{\omega_2}}{a^{\omega_2}}\right)X_3^r +\frac{b^qcX_1^{\omega_3+q\omega_2}}{a^{\omega_3+q\omega_2}}-\frac{b^qX_1^{q\omega_2}}{a^{q\omega_2}}X_3\\
	&=-X_3X_2^q -X_2X_3^r + \frac{bc^rX_1^{r\omega_3+\omega_2}}{a^{r\omega_3+\omega_2}} +\frac{b^qcX_1^{\omega_3+q\omega_2}}{a^{\omega_3+q\omega_2}}\, \, \mbox{ and }\\
	& \frac{bc^rX_1^{r\omega_3+\omega_2}}{a^{r\omega_3+\omega_2}} +\frac{b^qcX_1^{\omega_3+q\omega_2}}{a^{\omega_3+q\omega_2}}=X_1^p \left(\frac{bc^r+b^qc}{a^{p}}\right)  = -X_1^p.
\end{align*}
(use $p=p\omega_1 = q\omega_2+\omega_3= r\omega_3+\omega_2$ for the last equality).
This proves the matrix factorization in this case.

{\bf Type $\mr{IV}_{p,q,r}$:} In this case $F = X_1^{p}+X_3X_2^{q}+X_1X_3^{r}$. 
Arguing as before, using \eqref{eq:ProdElem} we have 
\begin{align*}
	&G_{(b,a,\omega_2)}(X_1,X_2)\left(X_3S_{(b,a,\omega_2,q)}(X_1,X_2)\right)\\
	&+G_{(c,a,\omega_3)}(X_1,X_3) \left(XS_{(c,a,\omega_3,r)}(X_1,X_3) + \frac{b^qX_1^{q\omega_2}}{a^{q\omega_2}}S_{(c,a,\omega_3,1)}(X_1,X_3)\right)\\
	&= -X_3X_2^q+\frac{c^rX_1^{r\omega_3+1}}{a^{r\omega_3}}-X_1X_3^r +\frac{b^qcX_1^{\omega_3+q\omega_2}}{a^{\omega_3+q\omega_2}}\, \mbox{ and }\\
	& \frac{c^rX_1^{r\omega_3+1}}{a^{r\omega_3}}+\frac{b^qcX_1^{\omega_3+q\omega_2}}{a^{\omega_3+q\omega_2}}=X_1^p \left(\frac{ac^r+b^qc}{a^{p}}\right)  = -X_1^p.
\end{align*}
(use $p=p\omega_1 = q\omega_2+\omega_3= r\omega_3+\omega_1=r\omega_3+1$ for the last equality).
This proves the matrix factorization in this case.

{\bf Type $\mr{V}_{p,q,r}$:} In this case $F(X_1,X_2,X_3) = X_2X_1^{p}+X_3X_2^{q}+X_1X_3^{r}$. 
Arguing as before using equation~\eqref{eq:ProdElem} we have,
\begin{align*}
	&G_{(b,a,\omega_2)}(X_1,X_2)\left(X_3S_{(b,a,\omega_2,q)}(X_1,X_2)+X_1^pS_{(b,a,\omega_2,1)}(X_1,X_2) \right)\\
	&+ G_{(c,a,\omega_3)}(X_1,X_3) \left(XS_{(c,a,\omega_3,r)}(X_1,X_3) + \frac{b^qX_1^{q\omega_2}}{a^{q\omega_2}}S_{(c,a,\omega_3,1)}(X_1,X_3)\right)\\
	&=-X_3X_2^q + \frac{bX_1^{\omega_2+p}}{a^{\omega_2}}-X_2X_1^p+\frac{c^rX_1^{r\omega_3+1}}{a^{r\omega_3}}-X_1X_3^r +\frac{b^qcX_1^{\omega_3+q\omega_2}}{a^{\omega_3+q\omega_2}}\, \mbox{ and }\\
	& \frac{bX_1^{\omega_2+p}}{a^{\omega_2}}+\frac{c^rX_1^{r\omega_3+1}}{a^{r\omega_3}}+\frac{b^qcX_1^{\omega_3+q\omega_2}}{a^{\omega_3+q\omega_2}}=X_1^{\omega_2+p}\left(\frac{ba^p+ac^r+b^qc}{a^{\omega_2+p}}\right)=0.
\end{align*}
(use $p+\omega_2 = q\omega_2+\omega_3= r\omega_3+\omega_1=r\omega_3+1$ for the last equality). 
This proves the 
matrix factorization in this case.

{\bf Type $\mr{VI}_{p,q,r,b_2,b_3}$:} In this case $F = X_1^{p}+X_1X_2^{q}+X_1X_3^{r}+X_2^{b_2}X_3^{b_3}$.
Arguing as before using \eqref{eq:ProdElem} we have 
\begin{align*}
	&G_{(b,a,\omega_2)}(X_1,X_2)\left(XS_{(b,a,\omega_2,q)}(X_1,X_2)+X_3^{b_3}S_{(b,a,\omega_2,b_2)}(X_1,X_2) \right)\\
	&+G_{(c,a,\omega_3)}(X_1,X_3) \left(XS_{(c,a,\omega_3,r)}(X_1,X_3) + \frac{b^{b_2}X_1^{b_2\omega_2}}{a^{b_2\omega_2}}S_{(c,a,\omega_3,b_3)}(X_1,X_3)\right)\\
	&=\frac{b^qX_1^{q\omega_2+1}}{a^{q\omega_2}}-X_1X_2^q + X_3^{b_3}\frac{b^{b_2}X_1^{b_2\omega_2}}{a^{b_2\omega_2}}-X_2^{b_2}X_3^{b_3}+\frac{c^rX_1^{r\omega_3+1}}{a^{r\omega_3}}-X_1X_3^r +\frac{b^{b_2}X_1^{b_2\omega_2}}{a^{b_2\omega_2}} \left( \frac{c^{b_3}X_1^{\omega_3 b_3}}{a^{\omega_3 b_3}}-X_3^{b_3}\right)\\
	&=\frac{b^qX_1^{q\omega_2+1}}{a^{q\omega_2}}-X_1X_2^q -X_2^{b_2}X_3^{b_3}+\frac{c^rX_1^{r\omega_3+1}}{a^{r\omega_3}}-X_1X_3^r +\frac{c^{b_3}b^{b_2}X_1^{b_2\omega_2+\omega_3 b_3}}{a^{b_2\omega_2+\omega_3 b_3}}\, \mbox{ and }\\
	& \frac{b^qX_1^{q\omega_2+1}}{a^{q\omega_2}}+\frac{c^rX_1^{r\omega_3+1}}{a^{r\omega_3}}+\frac{c^{b_3}b^{b_2}X_1^{b_2\omega_2+\omega_3 b_3}}{a^{b_2\omega_2+\omega_3 b_3}}=X_1^p\left(\frac{ab^q}{a^{q\omega_2+1}}+\frac{ac^r}{a^{r\omega_3+1}}+\frac{c^{b_3}b^{b_2}}{a^{b_2\omega_2+\omega_3 b_3}}\right)=-X_1^p
\end{align*}
(use $p = 1+q\omega_2= r\omega_3+1=b_2\omega_2+b_3\omega_3$ for the last equality).
This proves the matrix factorization in this case.

{\bf Type $\mr{VII}_{p,q,r,b_2,b_3}$:} In this case $F = X_2X_1^{p}+X_1X_2^{q}+X_1X_3^{r}+X_2^{b_2}X_3^{b_3}$. 
Arguing as before using \eqref{eq:ProdElem} we have,
\begin{align*}
	&G_{(b,a,\omega_2)}(X_1,X_2)\left(XS_{(b,a,\omega_2,q)}(X_1,X_2)+X_3^{b_3}S_{(b,a,\omega_2,b_2)}(X_1,X_2)+X_1^pS_{(b,a,\omega_2,1)}(X_1,X_2) \right)\\
	&+G_{(c,a,\omega_3)}(X_1,X_3) \left(XS_{(c,a,\omega_3,r)}(X_1,X_3) + \frac{b^{b_2}X_1^{b_2\omega_2}}{a^{b_2\omega_2}}S_{(c,a,\omega_3,b_3)}(X_1,X_3)\right)\\
	&=\frac{b^qX_1^{q\omega_2+1}}{a^{q\omega_2}}-X_1X_2^q + X_3^{b_3}\frac{b^{b_2}X_1^{b_2\omega_2}}{a^{b_2\omega_2}}-X_2^{b_2}X_3^{b_3}+\frac{bX_1^{\omega_2+p}}{a^{\omega_2}}-X_2X_1^p + 	\frac{c^rX_1^{r\omega_3+1}}{a^{r\omega_3}}\\
	&-X_1X_3^r +\frac{b^{b_2}X_1^{b_2\omega_2}}{a^{b_2\omega_2}} \left( \frac{c^{b_3}X_1^{\omega_3 b_3}}{a^{\omega_3 b_3}}-X_3^{b_3}\right)\\
	&=\frac{b^qX_1^{q\omega_2+1}}{a^{q\omega_2}}-X_1X_2^q-X_2^{b_2}X_3^{b_3}+\frac{bX_1^{\omega_2+p}}{a^{\omega_2}}-X_2X_1^p + 	\frac{c^rX_1^{r\omega_3+1}}{a^{r\omega_3}}-X_1X_3^r +\frac{c^{b_3}b^{b_2}X_1^{b_2\omega_2+\omega_3 b_3}}{a^{b_2\omega_2+\omega_3 b_3}}.\\
\end{align*}
Moreover, using $\omega_2+p=1+q\omega_2= r\omega_3+1=b_2\omega_2+b_3\omega_3$ we have 
\begin{equation*}
	\frac{b^qX_1^{q\omega_2+1}}{a^{q\omega_2}}+\frac{bX_1^{\omega_2+p}}{a^{\omega_2}}+\frac{c^rX_1^{r\omega_3+1}}{a^{r\omega_3}}+\frac{c^{b_3}b^{b_2}X_1^{b_2\omega_2+\omega_3 b_3}}{a^{b_2\omega_2+\omega_3 b_3}}=X_1^{q\omega_2+1} \left(\frac{ab^q+a^p b+ac^r+c^{b_3}b^{b_2}}{a^{q\omega_2+1}}\right)=0	
\end{equation*}
This proves the matrix factorization in this case and hence the theorem. \qed
		
	\begin{rem}
		Notice that in the case of $F=X_1^3+X_2^3+X_3^3$, our computation recovers the matrix factorization computed by Laza, Pfister and Popescu~\cite{lazam}. 
	\end{rem}
	
%

\begin{rem}\label{rem:comp}
	For the sake of completeness we now consider the case when $a=0$. 
	For simplicity we consider the polynomial of type $\mr{I}_{p,q,r}$, the remaining 
	cases follow similarly. To fix notation, $F = X_1^{p}+X_2^{q}+X_3^{r}$ with weights $(\omega_1, \omega_2, \omega_3)$ and $V$ is the surface defined by $F$. Let $(a,b,c) \in V(p,q,r)$ with $a=0$. 
	Since the point $(0,b,c)$ is different from the origin and it is a zero of $F$, thus $b$ and $c$ are both non-zero. The $\CC^*$-curve, denoted $W_{b,c}$,
	associated to the point $(0,b,c)$ is given by the parametrization
	\[		n \colon \mb{C}^*  \to X\, \mbox{ such that }
		\lambda \mapsto (0,b\lambda^{\omega_2}, c\lambda^{\omega_3}).\]
	This $\CC^*$-curve is smooth if and only if $\omega_2=1$ or $\omega_3=1$ (upto reparametrization). 
 Without loss of generality suppose that $\omega_2=1$. 
 Under this assumption the $\CC^*$-curve given by the point $(0,b,c)$ is cut out by the polynomials
	\begin{equation*}
		f=X_1 \quad \text{and} \quad 	G_{(c,b,\omega_3)}(X_2,X_3)=cX_2^{\omega_3}-b^{\omega_3}X_3.
	\end{equation*}
	By equation~\eqref{eq:ProdElem},
	\begin{equation*}
		X_1(-X_1^{p-1})+G_{(c,b,\omega_3)}(X_2,X_3)S_{(c,b,\omega_3,r)}(X_2,X_3) =-X_1^p+ \frac{c^rX_2^{r\omega_3}}{b^{r\omega_3}}-X_3^r.
	\end{equation*}
	By assumption, $a=0$ and $b^q+c^r=0$. Therefore,
	\begin{equation*}
		-X_1^p+ \frac{c^rX_2^{r\omega_3}}{b^{r\omega_3}}-X_3^r = -X_1^p -X_2^q-X_3^r.
	\end{equation*}
	Let $M$ be the maximal Cohen-Macaulay module corresponding to the degeneracy locus $W_{b,c}$ 
	(see Proposition \ref{prop:corr}).
	Using Corollary \ref{thm1}, we conclude that the matrix factorization for $M$ is:
	\begin{equation*}
		\begin{pmatrix}
			-X & b^{\omega_3}X_3-cX_2^{\omega_3}\\S_{(c,b,\omega_3,r)}(X_2,X_3)&X_1^{p-1}  \end{pmatrix}.
	\end{equation*}
	\end{rem} 

	\subsection{Conjecture of Etingof-Ginzburg}
	Take $\Phi:= X_1X_2X_3-X_2X_1X_3$. Then, $\mf{A}(\Phi)=\mb{C}[X_1,X_2,X_3]$ (see \cite[Example $1.3.3$]{ginz2}). 
	We prove:
	
	\begin{thm}\label{thm:conj}
	 Let $\Psi \in \mf{A}(\Psi)$  be one of polynomials mentioned in 
	Table \ref{tabl1} such that one of the weights is one. Then, to any point module on $\mf{B}(\Phi, \Psi)$ one 
	can naturally associate a matrix factorization.
	\end{thm}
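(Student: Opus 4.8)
The plan is to identify point modules over $\mf{B}(\Phi,\Psi)$ with the non-singular $\mb{C}^*$-curves on the surface $X:=\{\Psi=0\}$, and then to read off the associated matrix factorization from Theorem~\ref{thm:Quasi}. Since $\Phi=X_1X_2X_3-X_2X_1X_3$ forces $\mf{A}(\Phi)=\mb{C}[X_1,X_2,X_3]$, the algebra $\mf{B}(\Phi,\Psi)=\mb{C}[X_1,X_2,X_3]/(\Psi)$ is commutative and equals the coordinate ring $\mo_X$, graded by the quasi-homogeneous weights $\un{\omega}=(\omega_1,\omega_2,\omega_3)$ with one weight equal to one. In this commutative situation an associated matrix factorization $(M_+,M_-)$ with entries in $\mf{A}(\Phi)$ is precisely a commutative matrix factorization of $\Psi=F$ in the sense of the Preliminaries, so the content of the theorem is to attach such a factorization to every point module in a canonical way.

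The main step is the classification of point modules. First I would show that every point module $P=\bigoplus_{n\ge 0}P_n$ is isomorphic to $\mo_W$ for a unique $\mb{C}^*$-curve $W=W_{a,b,c}\subset X$. After normalising so that the weight-one variable, say $X_1$ with $\omega_1=1$, maps $P_n$ isomorphically onto $P_{n+1}$ (this propagation is forced because only a degree-one generator can raise the degree by one while keeping $\dim_{\mb{C}}P_n=1$), commutativity of $\mf{A}(\Phi)$ forces the remaining generators $X_2,X_3$ to act by constant scalars $b,c$ relative to this trivialisation, and the relation $\Psi=0$ forces the point $(1,b,c)$ to lie on $X$. Thus $P$ recovers exactly a closed point of $\mb{P}^{\un{\omega}}_{X^*}$, i.e.\ a $\mb{C}^*$-curve $W_{a,b,c}$, and conversely $\mo_{W_{a,b,c}}$ is manifestly a point module since $W_{a,b,c}\cong\mb{A}^1$ has one-dimensional graded pieces in every degree. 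Because one weight is one, every such $\mb{C}^*$-curve is non-singular, so the isomorphism $\mo_W\cong\mo_D$ places us in the generalised Wunram setting of \S\ref{sec:gen-wun}.

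It then remains to produce the factorization. Given $P\cong\mo_W$, the curve $W$ is the degeneracy locus of a rank one generalised Wunram module $M$ fitting into the exact sequence \eqref{eq01}, namely $0\to\mo_X\to M\to\mo_W\to0$; I would define $M(P):=(\mr{adj}(A),A)$ to be the matrix factorization of $M$, which is computed explicitly by Theorem~\ref{thm:Quasi} when the weight-one coordinate is non-zero and by Remark~\ref{rem:comp} in the remaining case. Naturality then follows because each arrow $P\mapsto W\mapsto M\mapsto(\mr{adj}(A),A)$ is canonical and isomorphic point modules yield the same $\mb{C}^*$-curve. The main obstacle I anticipate is the classification step: the ring $\mo_X$ is \emph{not} generated in degree one under the weighted grading, so the standard Artin--Tate--Van den Bergh parametrisation does not apply verbatim, and one must verify directly both the propagation of the weight-one generator and that precisely the points with non-zero weight-one coordinate, equivalently the non-singular $\mb{C}^*$-curves, arise as point modules.
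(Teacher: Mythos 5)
Your proposal follows the same skeleton as the paper's proof: parameterize point modules by points of $X^*$ with non-vanishing weight-one coordinate (equivalently, by smooth $\mb{C}^*$-curves) and then attach the matrix factorization supplied by Theorem \ref{thm:Quasi}. The difference lies in the dictionary. The paper declares the residue field $k(a,b,c)$ of a point of $X^*$ to be a point module, asserts that every point module is a direct sum of such residue fields, and takes block-diagonal sums of the resulting factorizations; you instead identify a point module with the graded cyclic module $\mo_W \cong \mb{C}[\lambda]$ of a smooth $\mb{C}^*$-curve. Your reading is the one faithful to the Artin--Tate--Van den Bergh definition cited in the conjecture (graded, cyclic, generated in degree zero, all graded pieces one-dimensional): a residue field of a non-fixed point is not even a graded $\mf{B}$-module, and a non-trivial direct sum is never cyclic. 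So as a matter of interpretation your route is at least as defensible as the paper's, and it correctly isolates where the real work lies.

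Unfortunately, the step you flag as the anticipated obstacle is a genuine gap, and the classification as you state it is false. The propagation claim --- that $X_1$ must map $P_n$ isomorphically onto $P_{n+1}$ because only a degree-one generator can raise degree by one --- fails for weighted gradings, since $P_{n+1}=\mf{B}_{n+1}P_0$ can be reached directly by the higher-weight generators without passing through $P_n$. Concretely, take the type $\mr{I}_{4,2,2}$ polynomial $\Psi=X_1^4+X_2^2+X_3^2$ with weights $(1,2,2)$, which is within the scope of the theorem. Since $\Psi=X_1^4+(X_3-iX_2)(X_3+iX_2)$, the homogeneous ideal $(X_1^2,\,X_3-iX_2)$ contains $\Psi$, and
\[
P:=\mf{B}/(X_1^2,\,X_3-iX_2)\;\cong\;\mb{C}[X_1,X_2]/(X_1^2)
\]
is cyclic, generated in degree zero, with every graded piece one-dimensional (degree $2j$ spanned by $X_2^j$, degree $2j+1$ by $X_1X_2^j$); hence it is a point module in the ATV sense. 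But $X_1$ acts nilpotently on $P$, so $P$ is not isomorphic to $\mo_W$ for any $\mb{C}^*$-curve $W$ (it is not even reduced); it is a non-reduced structure on the orbit closure inside $\{X_1=0\}$ whose thickening fills the gaps in the Hilbert function. Such modules escape your classification --- and, for what it is worth, also the paper's ``direct sum of residue fields'' claim. To close the gap you would have to either prove a classification that accounts for these degenerate point modules, or handle them directly: $P$ above is still a one-dimensional Cohen--Macaulay $\mo_X$-module (of rank two over the line $\{X_1=0,\,X_3=iX_2\}$), so a sufficiently high syzygy of $P$ over $\mo_X$ is maximal Cohen--Macaulay and Theorem \ref{Th:EisTh} (or the degeneracy-module mechanism of Theorem \ref{thm2}, extended beyond the rank-one-over-support case) still produces a natural matrix factorization; but neither your sketch nor the paper carries this out.
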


	\begin{proof}
	Denote by $X$ the hypersurface defined by $\Psi$.
	 	Consider a point $(a,b,c) \in X$ with $a \not= 0$. Denote by $k(a,b,c)$ the residue field associated to the point $(a,b,c)$. Note that, 
	$k(a,b,c)$ is a point module. 
	Then, by Theorem \ref{thm:Quasi} we naturally associate to the  point module $P:= k(a,b,c)$ 
	a matrix factorization $M(P)=(M(P)_+,M(P)_-)$. Moreover, every point module is a direct sum of copies of 
	such residue fields i.e.,
	any point module $P$ is of the form:
	\[ P:= \bigoplus_{i \in I} k(a_i,b_i,c_i)^{\oplus m_i},\, \mbox{ where } (a_i,b_i,c_i) \in X^* \mbox{ and } m_i>0.\]
	Denote by $P_i$ the point module $k(a_i,b_i,c_i)$ and by $M(P_i):=(M(P_i)_+,M(P_i)_-)$ the corresponding 
	matrix factorization.  Denote by $M(P)_+$ (resp. $M(P)_-$) the matrix with diagonal entries
	$m_i$-copies of $M(P_i)_+$ (resp. $M(P_i)_-$) as $i$ varies along the entries in $I$. Then,
	the matrix factorization associated to $P$ is $M(P):= (M(P)_+,M(P)_-)$. This proves the theorem.
	\end{proof}

\section{More examples: cusps and non-isolated singularities}

	In this section we obtain the matrix factorization for certain cusp singularities and non-isolated singularities.

	\subsection{Cusp singularities} \label{sec:cusp}
	Let 
	\begin{equation*}
		F(X_1,X_2,X_3)= X_1^{(r-2)q}+X_2^q+X_3^r+\tau X_1X_2X_3,
	\end{equation*}
	with $\tau \in \CC^*$ and $r \geq 3$. Denote by $X$ the surface defined by $F$. 
	Let $\omega \in \mb{C}$ such that $\omega^{r-1}=1/\tau$. Take a point $(a,b,c)\in \CC^3$ 
	different from the origin such that 
	\begin{equation}\label{eq:ass}
	 a^{q(r-2)}+b^q=0\, \mbox{ and }\, c(c^{r-1}+ab)=0.
	\end{equation}
	 Consider the $\mb{C}^*$-curve, denoted by $W_{a,b,c}$,
	given by the parametrization:
	\[n \colon \mb{C}^* \to X \mbox{ such that }
		\lambda \mapsto (a\lambda\omega,b\lambda^{r-2}\omega^{r-2}, c\lambda).\]
		Note that, the morphism $n$ indeed maps to $X$ because
	\begin{align*}
	F(n(\lambda))&=  (a\lambda\omega)^{(r-2)q}+(b\lambda^{r-2}\omega^{r-2})^q+(c\lambda)^r+\frac{1}{\omega^{r-1}} (a\lambda\omega)(b\lambda^{r-2}\omega^{r-2})(c\lambda)\\
&=(\lambda\omega)^{(r-2)q} \left( a^{(r-2)q} +b^q \right) + \lambda^r\left(c^r +abc \right) = 0 
	\end{align*}
	where the last equality follows from \eqref{eq:ass}. Let $M_{a,b,c}$ be the maximal Cohen-Macaulay 
	$\mo_X$-module associated to the degeneracy locus $W_{a,b,c}$ (see Proposition \ref{prop:corr}). We prove:

\begin{thm}
	\label{th:Cusp}
	The matrix factorization associated to $M_{a,b,c}$ is given by
	\begin{equation*}
        \begin{pmatrix}
			G_{(c,a\omega,1)}(X_1,X_3) & -G_{ (b,a,r-2)}(X_1,X_2)\\
			S_{(b,a,r-2,q)(X_1,X_2)} + \frac{cX_1^2}{a\omega^r}S_{(b,a,r-2,1)(X_1,X_2)}	& S_{(c,a\omega,1,r)}(X_1,X_3)+\frac{X_1X_2}{\omega^{r-1}}S_{(c,a\omega,1,1)}(X_1,X_3)  \end{pmatrix},
	\end{equation*}
	where $G_{(c_1,c_2,n)}(Z_1,Z_2):=c_1 Z_1^{n} -c_2^{n}Z_2$ and 
		\[S_{(c_1,c_2,n,m)}(Z_1,Z_2):=\sum_{j=1}^m \frac{Z_1^{(j-1)n}Z_2^{m-j}c_1^{j-1} }{c_2^{jn}}.\]
\end{thm}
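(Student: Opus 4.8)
The plan is to reduce everything to Corollary \ref{thm1}, exactly as in the proof of Theorem \ref{thm:Quasi}: once one exhibits the degeneracy curve $W_{a,b,c}$ as a non-singular complete intersection $Z(f)\cap Z(g)\subset\CC^3$ and writes $F=h_1g+h_2f$, the matrix factorization is forced to be $(\mr{adj}(C),C)$ with $C=\begin{pmatrix}-f & g\\ h_1 & h_2\end{pmatrix}$. So the real content is (i) identifying $f,g$, (ii) checking smoothness, and (iii) producing $h_1,h_2$.

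First I would identify the defining equations of $W_{a,b,c}$. Eliminating $\lambda$ from the parametrization $\lambda\mapsto(a\omega\lambda,\,b\omega^{r-2}\lambda^{r-2},\,c\lambda)$ gives the two relations $cX_1-a\omega X_3=G_{(c,a\omega,1)}(X_1,X_3)$ and $bX_1^{r-2}-a^{r-2}X_2=G_{(b,a,r-2)}(X_1,X_2)$. Since the entries of the asserted matrix involve division by $a$ and $\omega$, I work under the standing hypothesis $a\neq0$ (so $\omega\neq0$). The Jacobian of $\bigl(G_{(c,a\omega,1)},G_{(b,a,r-2)}\bigr)$ then has its $X_2,X_3$-minor equal to $-a^{r-1}\omega\neq0$, so these two polynomials cut out $W_{a,b,c}$ as a smooth complete intersection and generate its (radical) ideal in $\CC[[X_1,X_2,X_3]]$. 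Hence $M_{a,b,c}$ is a rank one generalized Wunram module and Corollary \ref{thm1} applies with $f=-G_{(c,a\omega,1)}(X_1,X_3)$ and $g=-G_{(b,a,r-2)}(X_1,X_2)$.

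The heart of the proof is the identity $F=h_1g+h_2f$ for the proposed bottom row
\begin{align*}
h_1 &= S_{(b,a,r-2,q)}(X_1,X_2)+\frac{cX_1^2}{a\omega^r}S_{(b,a,r-2,1)}(X_1,X_2),\\
h_2 &= S_{(c,a\omega,1,r)}(X_1,X_3)+\frac{X_1X_2}{\omega^{r-1}}S_{(c,a\omega,1,1)}(X_1,X_3).
\end{align*}
I would expand $G_{(b,a,r-2)}h_1+G_{(c,a\omega,1)}h_2$ term by term using the telescoping identity \eqref{eq:ProdElem} with $k=0$, which converts each product $G\cdot S$ into a difference of two monomials. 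Three things then happen: the two cross terms $\pm\frac{cX_1^2X_2}{a\omega^r}$ produced by the correction summands cancel; the surviving mixed term is $-\frac{X_1X_2X_3}{\omega^{r-1}}=-\tau X_1X_2X_3$ because $\omega^{r-1}=1/\tau$; and the two pure $X_1^r$ contributions combine into $\frac{cX_1^r}{a^r\omega^r}(ab+c^{r-1})$.

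This is where the two hypotheses in \eqref{eq:ass} enter, and the bookkeeping here is the step I expect to require the most care. The relation $c(c^{r-1}+ab)=0$ kills the $X_1^r$ term (uniformly covering both $c=0$ and $c^{r-1}+ab=0$), while $a^{q(r-2)}+b^q=0$ forces the leading coefficient $\frac{b^q}{a^{(r-2)q}}=-1$, turning the remaining monomial into $-X_1^{(r-2)q}$. Collecting everything gives $G_{(b,a,r-2)}h_1+G_{(c,a\omega,1)}h_2=-F$, hence $h_1g+h_2f=-\bigl(G_{(b,a,r-2)}h_1+G_{(c,a\omega,1)}h_2\bigr)=F$ after the sign substitution $f=-G_{(c,a\omega,1)}$, $g=-G_{(b,a,r-2)}$. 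Feeding $(f,g,h_1,h_2)$ into Corollary \ref{thm1} yields precisely the asserted matrix $C$, completing the proof. The conceptual point worth flagging is that $F$ is genuinely \emph{not} quasi-homogeneous here (the exponents $(r-2)q$ and $r$ disagree unless $r\in\{3,4\}$), so unlike in Theorem \ref{thm:Quasi} the correction summands in $h_1,h_2$ are indispensable: they exist precisely to manufacture the cusp monomial $\tau X_1X_2X_3$ and to balance the $X_1^r$ terms.
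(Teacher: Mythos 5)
Your proposal matches the paper's proof essentially step for step: identify $W_{a,b,c}$ as the complete intersection cut out by $G_{(c,a\omega,1)}(X_1,X_3)$ and $G_{(b,a,r-2)}(X_1,X_2)$, expand the two $G\cdot S$ products via the telescoping identity \eqref{eq:ProdElem}, invoke the hypotheses \eqref{eq:ass} to kill the $X_1^r$ terms and normalize the coefficient of $X_1^{q(r-2)}$ to $-1$, and conclude by Corollary \ref{thm1}. The only additions — the explicit Jacobian check of smoothness and the sign bookkeeping $f=-G_{(c,a\omega,1)}$, $g=-G_{(b,a,r-2)}$ — are details the paper leaves implicit, so there is nothing to change.
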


\begin{proof}
	Note that the curve $W_{a,b,c}$ is cut out by the polynomials:
	\begin{equation*}
		G_{(c,a\omega,1)}(X_1,X_3)=cX_1-a\omega X_3 \quad \text{and} \quad G_{ (b,a,r-2)}(X_1,X_2)=bX_1^{r-2}-a^{r-2}X_2.
	\end{equation*}
Using \eqref{eq:ProdElem}, we have 
		\begin{align*}
	&G_{(c,a\omega,1)}(X_1,X_3)\left(S_{(c,a\omega,1,r)}(X_1,X_3)+\frac{X_1X_2}{\omega^{r-1}}S_{(c,a\omega,1,1)}(X_1,X_3)\right)\\
	&+G_{ (b,a,r-2)}(X_1,X_2)\left( S_{(b,a,r-2,q)(X_1,X_2)} + \frac{cX_1^2}{a\omega^r}S_{(b,a,r-2,1)(X_1,X_2)} \right)\\
	&=\frac{c^r X_1^r}{a^r \omega^r}-X_3^r -\frac{X_1X_2X_3}{\omega^{r-1}}+\frac{b^qX_1^{q(r-2)}}{a^{q(r-2)}}-X_2^q + \frac{cbX_1^r}{a^{r-1}\omega^r} \mbox{ and }\\
	& \frac{c^r X_1^r}{a^r \omega^r}+\frac{cbX_1^r}{a^{r-1}\omega^r}=0\, \, \mbox{ and }\, \, 
	\frac{b^qX_1^{q(r-2)}}{a^{q(r-2)}}=-X_1^{q(r-2)}.
		\end{align*}
where the equalities in the last line follows from the hypothesis \eqref{eq:ass}.
	Using Corollary \ref{thm1} we conclude that the matrix factorization associated to $M_{a,b,c}$ is
	as given in the statement of the theorem. This proves the theorem.
\end{proof}
	
	\begin{rem}
	Note that:
	\begin{enumerate}
	 \item If we assume $q=r=3$, then $F$ is the cubic polynomial studied by Etingof and Ginzburg~\cite{ginz}.
	 \item If we impose the inequality $r < q(r-2)$, then  $F$ is a cusp singularity of type $T_{(r-2)q,q,r}$ (see~\cite[Theorem~7.10]{burb}).
	\end{enumerate}
	\end{rem}

\subsection{Non-isolated singularities} \label{sec:nonIsolated}
Our next application is to show how to generalize the construction of Baciu~\cite{baci}. Consider the homogeneous polynomial
\begin{equation*}
	F= X_1^{4}+X_1^{3}X_3-X_2^{4}X_3.
\end{equation*}
In this case the singular locus is the line
\begin{equation*}
	X_{\mathrm{sing}}=\{(0,0,z) \in \CC^3 \, | \, z\in \CC\}.
\end{equation*}
Let $(a,b,1) \in \mb{C}^3 \setminus \{0\}$ such that $F(a,b,1)=0$.  The $\CC^*$-curve given by the point $(a,b,1)$ is the zero locus of the ideal given by
\begin{equation*}
	f=X_1-X_3a \quad \text{and} \quad g=X_2-X_3b.
\end{equation*}
Let $h_1 =X_1^3+X_1^2X_2+aX_1^2X_3+aX_1X_2X_3+a^2X_1X_3^2+a^2X_2X_3^2+a^3X_3^3$ and 
\[ h_2 =X_2^2X_3+bX_2X_3^2+(a^3+b^2)X_3^3.\]
We then have the corresponding matrix factorizations of $F$:
\begin{equation*}
	M(a,b,c)=\begin{pmatrix}
		f & g\\-h_2& h_1 \end{pmatrix}.
\end{equation*}
Notice that $h_1$ and $h_2$ can be rewritten as:
\begin{align*}
	h_1 =&X_1(X_1^2+X_1X_2+aX_1X_3+aX_2X_3)+X_3(a^2X_1X_3+a^2X_2X_3+a^3X_3^2),\\
	h_2 =&X_3(X_2^2+bX_2X_3+(a^3+b^2)X_3^2).
\end{align*}
Therefore, the following matrices (also parameterized by the points $(a:b:1) \in X^*$) are matrix factorizations of $F$:
\begin{equation*}
	M(a,b,c;3)=\begin{pmatrix}
		0 & -f & g\\
		X & -X_2^2-bX_2X_3-(a^3+b^2)X_3^2 &-a^2X_1X_3-a^3X_3^2 \\
		X_3 & 0 &X_1^2+X_1X_2+aX_1X_3+aX_2X-aX_2X_3 \end{pmatrix}.
\end{equation*}

\section*{Acknowledgement}
We thank Prof. Javier F. de Bobadilla and Prof. Duco van Straten for 
	their interest in this problem and helpful comments. The first author is funded by 
	EPSRC grant number EP/T019379/1. The second author is funded by OTKA 126683 and Lend\"ulet 30001. The second author thanks CIRM, Luminy, for its hospitality and for providing a perfect work environment. He also thanks Prof. Javier F. de Bobadilla, the 2021 semester 2 Jean-Morlet Chair, for the invitation.

\end{document}